\numberwithin{equation}{section}
\numberwithin{figure}{section}
\theoremstyle{plain}
\newtheorem{thm}{Theorem}[section]
\newtheorem{theorem}{\protect\theoremname}[section]
\theoremstyle{definition}
\newtheorem{defn}[thm]{\protect\definitionname}
\theoremstyle{plain}
\newtheorem{cor}[thm]{\protect\corollaryname}
\theoremstyle{plain}
\newtheorem{lem}[thm]{\protect\lemmaname}
\theoremstyle{remark}
\newtheorem{rem}[thm]{\protect\remarkname}
\theoremstyle{plain}
\newtheorem{prop}[thm]{\protect\propositionname}
\providecommand{\corollaryname}{Corollary}
\providecommand{\definitionname}{Definition}
\providecommand{\lemmaname}{Lemma}
\providecommand{\propositionname}{Proposition}
\providecommand{\remarkname}{Remark}
\providecommand{\theoremname}{Theorem}
\global\long\def\C{\mathbb{C}}%
\global\long\def\R{\mathbb{R}}%
\global\long\def\N{\mathbb{N}}%
\global\long\def\Z{\mathbb{Z}}%
\global\long\def\F{\mathcal{F}}%
\global\long\def\n#1{\left\Vert #1\right\Vert }%
\DeclareMathOperator{\SL}{SL}
\DeclareMathOperator{\PSL}{PSL}
\DeclareMathOperator{\PGL}{PGL}
\begin{document}
\title{Cutoff on Graphs and the Sarnak-Xue Density of Eigenvalues}
\author{Konstantin Golubev~
and Amitay Kamber}
\thanks{Konstantin Golubev, k.golubev@gmail.com \\
D-MATH, ETH Zurich, Switzerland  \\
Amitay Kamber, ak2356@dpmms.cam.ac.uk \\ 
Centre for Mathematical Sciences, Wilberforce Road, Cambridge CB3 0WB, UK}
\begin{abstract}
It was recently shown in \cite{lubetzky2016cutoff} and \cite{sardari1diameter}
that Ramanujan graphs, i.e., graphs with the optimal spectrum, exhibit
cutoff of the simple random walk in an optimal time and have an optimal
almost-diameter. We show that this spectral condition can
be replaced by a weaker condition, the Sarnak-Xue density
property, to deduce similar results. This allows us to prove that some natural families of Schreier graphs of the
$\SL_{2}\left(\mathbb{F}_{t}\right)$-action on the projective line
exhibit cutoff, thus proving a special case of a conjecture of Rivin and Sardari, \cite{rivin2019quantum}. 
\end{abstract}

\maketitle

\section{Introduction}

Various works (e.g.~\cite{bourgain2008uniform,lubotzky1988ramanujan})
proved that many families of $(q+1)$-regular graphs are expanders, and in particular their diameter (i.e., the largest distance between a pair of vertices) is equal to $\log_{q}\left(n\right)$
up to a multiplicative constant, where $n$ is the number of vertices.
In this paper, we show that under certain conditions, the distance
between most of the vertices in a graph is approximately optimal,
that is, for every $\epsilon>0$ and $n$ large enough, the distance is bounded by $(1+\epsilon)\log_{q}\left(n\right)$. We start with a special case of Schreier graphs of $\SL_{2}\left(\mathbb{F}_{t}\right)$,
which follows from Theorem~\ref{thm:random generators} below.
\begin{theorem}
\label{thm:Example}Let $\epsilon_0,\epsilon_1,\epsilon_2,\epsilon_3>0$ and $l\in\N$ be fixed. Then there exists $t_0$ such that the following holds. Let $t>t_0$ be a prime, let $\mathbb{F}_{t}$ be the finite field with $t$ elements, and let $P^{1}\left(\mathbb{F}_{t}\right)$ be the projective line over $\mathbb{F}_{t}$. Let $s_{1},s_{2},..,s_{l}$
be $l$ elements in $\SL_{2}\left(\mathbb{F}_{t}\right)$ chosen uniformly at random. Construct a $2l$-regular Schreier graph by connecting each point $\left[\begin{array}{c}
a\\
b
\end{array}\right]$ of $P^{1}\left(\mathbb{F}_{t}\right)$ to $s_{i}^{\pm}\left[\begin{array}{c}
a\\
b
\end{array}\right]$, $i=1,..,l$. Then with probability at least $1-\epsilon_0$
the following two statements hold:
\begin{itemize}
\item (Almost-Diameter) The distance between all but $\epsilon_1t^{2}$ of
the pairs $\left[\begin{array}{c}
a\\
b
\end{array}\right],\left[\begin{array}{c}
a'\\
b'
\end{array}\right]\in P^{1}\left(\mathbb{F}_{t}\right)$ satisfies 
\begin{align*}
d\left(\left[\begin{array}{c}
a\\
b
\end{array}\right],\left[\begin{array}{c}
a'\\
b'
\end{array}\right]\right)\le\left(1+\epsilon_2\right)\log_{2l-1}\left(t\right).
\end{align*}
\item (Cutoff) Consider the distribution of the simple random walk $A^{k}\delta_{x_{0}}$, 
starting from some $x_{0}=\left[\begin{array}{c}
a\\
b
\end{array}\right]\in P^{1}\left(\mathbb{F}_{t}\right)$. Here $A$ is the normalized adjacency operator on $L^{2}\left(P^{1}\left(\mathbb{F}_{t}\right)\right)$
defined by the graph structure, and $\delta_{x_{0}}$ is the probability
$\delta$-function supported on $x_{0}$. Then for all but $\epsilon_1 t$ of $x_{0}\in P^{1}\left(\mathbb{F}_{t}\right)$,
for every $k>\left(1+\epsilon_2\right)\frac{2l}{2l-2}\log_{2l-1}\left(n\right)$,
it holds that 
\begin{align*}
\n{A^{k}\delta_{x_{0}}-\pi}_{1}\le \epsilon_3,
\end{align*}
where $\pi$ is the constant
probability function on $P^1(\mathbb{F}_t)$.
\end{itemize}
\end{theorem}

Bourgain and Gamburd proved in \cite{bourgain2008uniform} that the
Cayley graphs of $\SL_{2}\left(\mathbb{F}_{t}\right)$ with respect
to random generators are expanders with probability tending to $1$
as $t\to\infty$. Since the graphs of Theorem~\ref{thm:Example}
are quotients of those graphs, they are expanders as well, which implies
that the distance between every two elements is bounded by $C\log_{2l-1}\left(t\right)$,
$C$ some constant. 
Theorem~\ref{thm:Example} further shows that for every $\epsilon>0$ and $t$ large enough, this constant is $(1+\epsilon)$ when we consider almost all the pairs. 


Let us provide some context to this result. Let $\F$ be a family of finite $\left(q+1\right)$-regular connected graphs with the number of vertices tending to infinity. The graphs can have multiple edges and loops. Let $X\in\F$ be a graph from the family, and let $n$
denote its number of vertices. By $A\colon L^{2}\left(X\right)\to L^{2}\left(X\right)$
we denote the normalized adjacency operator of $X$
\begin{align*}
Af(x_{0})=\frac{1}{q+1}\sum_{x_{1}\sim x_{0}}f(x_{1}),
\end{align*}
where $x_1~\sim x_0$ means that $x_1$ is adjacent to $x_0$, and the sum is with multiplicity according to the number of edges between $x_0$ and $x_1$.

There are various results relating the eigenvalues of $A$ with the
geometry of $X$. In particular, if the largest eigenvalue of $A$
in absolute value, excluding $\pm1$, is bounded by some $\lambda<1$,
i.e., $X$ is an expander, then it is well known that the diameter of the graph is logarithmic in its size (\cite{hoory2006expander}).
Explicitly, in \cite{chung1994upper} it is proven that the diameter
is bounded by 
\begin{align*}
\left\lfloor \frac{\cosh^{-1}\left(n-1\right)}{\cosh^{-1}\left(1/\lambda\right)}\right\rfloor +1.
\end{align*}

A special case is when the graph is a Ramanujan graph, which means
that $\lambda\le2\sqrt{q}/\left(q+1\right)$. The Alon-Boppana theorem states that Ramanujan graphs are asymptotically the best spectral expanders. Then the upper bound on the diameter is $2\log_{q}\left(n\right)+O\left(1\right)$, as proved already by Lubotzky, Phillips and Sarnak (\cite{lubotzky1988ramanujan}).
Recently, it was proven by Lubetzky and Peres (\cite{lubetzky2016cutoff}) and independently by Sardari (\cite{sardari1diameter}) that when one considers the \emph{almost-diameter} of a Ramanujan graph $X$, i.e., the distance between most of the pairs of vertices in the graph, it is bounded by $(1+\epsilon)\log_{q}\left(n\right)$, which is, up to the $\epsilon$ factor, an optimal result. In \cite{lubetzky2016cutoff} it is also shown that the simple random walk on $X$ exhibits cutoff in the $L^{1}$-norm. The same results hold for a sequence of graphs that are almost-Ramanujan, i.e., for every $\epsilon>0$ and $n$ large enough, they satisfy $\lambda\le2\sqrt{q}/\left(q+1\right)+\epsilon$.

In practice, there are very few families of $(q+1)$-regular graphs that are known to be Ramanujan or almost-Ramanujan. For example, the question is completely open for the graphs underlying Theorem~\ref{thm:Example}, or other natural families of graphs we discuss below. It is widely believed that those graphs are almost-Ramanujan, but proving it seems to be currently out of reach. As a corollary, they should have an optimal almost-diameter and exhibit a cutoff of the simple random work. We refer to the work of Rivin and Sardari (\cite{rivin2019quantum}) for those conjectures and some experimental verification of them.

In this work, we show that some of the corollaries can be proved by replacing the Ramanujan condition with a weaker spectral \emph{density condition}. 
The density condition is based on the work of Sarnak and Xue (\cite{sarnak1991bounds})
on multiplicities of automorphic representations. More importantly, as Sarnak and Xue showed in the context of Lie groups of rank $1$, the density condition is also equivalent to a natural combinatorial path counting property, see Definition~\ref{def:weak-inj-radius}
and Theorem~\ref{thm:SX-equivalence}, allowing us to prove that it holds in many situations.

We use the following notations. For a real function $f(X,y,z)$ of the graph $X$ and auxiliary parameters $y,z$, we write $f(X,y,z)\ll_{\mathcal{F},y}g(X,y,z)$
if there exist constants $n_0,C$ depending on the family $\mathcal{F}$
and $y$, but not on $X$ and $z$, such that when the size $n$ of $X$ is greater than $n_0$, it holds that $f(X,y,z)\le C g(X,y,z)$.
The constants may also depend on the regularity $q$, but we do not mention it explicitly. The dependence on $\mathcal{F}$ is used to allow the family of graphs to depend on some other parameters, and we stress that the constant $C$ will be the same for all the graphs $X\in \mathcal{F}$.
We write $f(\cdot)=O_{\mathcal{F},y}\left(g\left(\cdot\right)\right)$
for $f\left(\cdot\right)\ll_{\mathcal{F},y}g\left(\cdot\right)$,
and $f(\cdot)\asymp_{\mathcal{F},y}g\left(\cdot\right)$ if both $f(\cdot)\ll_{\mathcal{F},y}g\left(\cdot\right)$
and $g(\cdot)\ll_{\mathcal{F},y}f\left(\cdot\right)$ take place.
We write $f(X,y,z)=o_{\mathcal{F},y}\left(g\left(X,y,z\right)\right)$,
if for every $c>0$ there exists $n_0$ depending on $\mathcal{F},y,c$ such that for $n=\left|X\right|>n_0$ it holds that $f(X,y,z)\le cg(X,y,z)$. 

Let $\lambda_{0}=1\ge\lambda_{1}\ge...\ge\lambda_{n-1}\ge-1$ be the
eigenvalues of $A$. We associate to $\lambda_{i}$ its \emph{$p$-value}
$p_{i}$ as follows 
\begin{equation*}
\begin{cases}
p_{i}=2 & \text{if }\left|\lambda_{i}\right|\le\frac{2}{q+1}\sqrt{q};\\
2<p_{i}\le\infty & \text{else, for }p_i\text{ such that } \left|\lambda_{i}\right|=\frac{1}{q+1}\left(q^{1/p_{i}}+q^{1-1/p_{i}}\right),
\end{cases}
\end{equation*}
where we use the convention that $1/\infty=\lim_{p\to\infty}1/p=0$.
In particular, $p_{0}=\infty$, and if $X$ is bipartite, then $\lambda_{n-1}=-1$
and $p_{n-1}=\infty$ as well. The definition of $p_{i}$ is based
on the action of $A$ on the $L^{p}$-functions on the $\left(q+1\right)$-regular
tree, namely, the $L^{p}$-norm of $A$ on the $\left(q+1\right)$-regular
tree is $\frac{1}{q+1}\left(q^{1/p}+q^{1-1/p}\right)$, see \cite{kamber2019p}.

\begin{defn}
We say that a family $\F$ of graphs satisfies \emph{the Sarnak-Xue
density property}, if for every graph $X\in\F$, $p>2$ and $\epsilon>0$,
\begin{align*}
\#\left\{ i:p_{i}\ge p\right\} \ll_{\F,\epsilon}n^{\nicefrac{2}{p}+\epsilon},
\end{align*}
where $n$ is the number of vertices of $X$.
\end{defn}

We prove two theorems that follow from the density property and will show below how one may prove the property. Theorem~\ref{thm:Cutoff theorem} was recently proved independently in \cite{bordenave2021cutoff}.

Since the number of vertices in a ball of radius $r$ in the $(q+1)$-regular tree is $1+\left(q+1\right)\frac{q^{r}-1}{q-1}\le3q^{r}$, the distance from a certain vertex to all but $o(n)$ of the other vertices is bounded from below by $\log_{q}\left(n\right)-o\left(1\right)$. This
shows that up to a $(1+o(1))$ factor, the following theorem is optimal.
\begin{theorem}
\label{thm:Almost-diameter theorem}If $\mathcal{F}$ is a family of expander graphs which satisfies the Sarnak-Xue density property, then for every $\epsilon_0, \epsilon_1, \epsilon_2>0$, there is $n_0$ depending on $\mathcal{F},\epsilon_0, \epsilon_1, \epsilon_2$, such that the following holds. For $X \in \mathcal{F}$ of size $|X|=n>n_0$, for
all but $\epsilon_0 n$ of $x_{0}\in X$,
all but $\epsilon_0 n$ of the vertices  of $X$ are within $\left(1+\epsilon_1\right)\log_{q}(n)$
distance from $x_{0}$, i.e.,
\begin{align*}
\#\left\{ y\in X\colon d\left(x_{0},y\right)>\left(1+\epsilon_1\right)\log_{q}(n)\right\} \le \epsilon_0 n.
\end{align*}
If, in addition, $X$ is vertex-transitive, then the
statement is true for all $x_{0}\in X$. Moreover, in such a case, $2\left(1+\epsilon_1\right)\log_{q}\left(n\right)$
is a bound on the diameter of $X$.
\end{theorem}

The second theorem concerns the cutoff phenomena, as discussed by
Lubetzky and Peres in \cite{lubetzky2016cutoff}. To simplify the
result, we assume that the graph $X$ is a non-bipartite graph. We
let $\delta_{x_{0}}\in L^{2}\left(X\right)$ be the delta probability
function supported on $x_{0}$, defined as $\delta_{x_{0}}\left(x_{0}\right)=1$
and $\delta_{x_{0}}\left(x\right)=0$ if $x\ne x_{0}$. Then $A^{k}\delta_{0}$
describes the probability distribution of the simple random walk that
starts at $x_{0}$ after $k$ steps. Since $X$ is non-bipartite,
this probability converges pointwise to the constant probability $\pi$,
defined as $\pi\left(x\right)=\frac{1}{n}$ for all $x\in X$. The
following theorem describes the speed of the convergence of this random
walk in the $L^{1}$-norm.
\begin{theorem}
\label{thm:Cutoff theorem}Assume that $X\in\mathcal{F}$ is a non-bipartite
graph. Then for every $\epsilon_0,\epsilon_1,\epsilon_2>0$:
\begin{enumerate}
\item There exists $n_0$ depending on $\epsilon_1,\epsilon_2$ such that for $X\in \mathcal{F}$, $|X|=n>n_0$, for every $x_{0}\in X$, for $k<\left(1-\epsilon_1\right)\frac{q+1}{q-1}\log_{q}\left(n\right)$,
it holds that 
\begin{align*} 
\n{A^{k}\delta_{x_{0}}-\pi}_{1}\ge2-\epsilon_2.
\end{align*} 
\item Assume moreover that $\mathcal{F}$ is a family of expander graphs which satisfies the Sarnak-Xue density property, then there exists $n_0$ depending on $\mathcal{F},\epsilon_0, \epsilon_1, \epsilon_2$ such that for $X\in \mathcal{F}$, $|X|=n>n_0$, for all but $\epsilon_0 n$
of $x_{0}\in X$, for every $k>\left(1+\epsilon_1\right)\frac{q+1}{q-1}\log_{q}\left(n\right)$,
it holds that 
\begin{align*}
\n{A^{k}\delta_{x_{0}}-\pi}_{1}\le \epsilon_2.
\end{align*}
If $X$ is moreover a vertex-transitive graph, the same result holds
for every $x_{0}\in X$.
\end{enumerate}
\end{theorem}

Collectively, we may summarize Theorem~\ref{thm:Almost-diameter theorem} and Theorem~\ref{thm:Cutoff theorem} by saying that \emph{an expander graph that satisfies the Sarnak-Xue density property has an optimal almost-diameter and exhibits cutoff}. We remark that Theorem~\ref{thm:Almost-diameter theorem} formally follows from Theorem~\ref{thm:Cutoff theorem}, but its proof is simpler and we think that it is of independent interest.

The cutoff phenomenon has been studied extensively in different settings in recent years and in particular regarding its relationship with the Ramanujan property (e.g.~\cite{lubetzky2016cutoff,lubetzky2020random,golubev2019cutoff,chapman2019cutoff}). The relation between the optimality of the almost-diameter and the Ramanujan property is also studied in different contexts. We already mentioned the work of Sardari (\cite{sardari1diameter}) and Lubetzky and Peres (\cite{lubetzky2016cutoff}), but it is also closely related to the work of Parzanchevski and Sarnak about Golden Gates (\cite{parzanchevski2018super}) and the general results of Ghosh, Gorodnik and Nevo on actions of semisimple groups (\cite{ghosh2014best}). 

The observation that the Ramanujan property can be replaced by the density condition was proved in the setting of hyperbolic surfaces in \cite{sarnak2015lettermiller,golubev2019cutoff}. Theorem~\ref{thm:Cutoff theorem},
which proves the analogous statement in the graph setting (and is actually easier), was recently 
proved independently in~\cite{bordenave2021cutoff}. 

The main novelty of our work is the \emph{verification of the density property} in a number of interesting cases, such as Theorem~\ref{thm:Example}. This is done by a path counting argument, which is explained in the rest of the introduction. 

\subsection*{The Weak Injectivity Radius Property}

We first give a parameterized version of the Sarnak-Xue density property
defined above.
\begin{defn}
\label{def:SX-density}We say that a family $\F$ satisfies \emph{the
Sarnak-Xue density property} \emph{with parameter }$0<A\le1$, if
for every $X\in\F$, $p>2$ and $\epsilon>0$, 
\begin{align*}
\#\left\{ i:p_{i}\ge p\right\} \ll_{\epsilon,\F}n^{1-A\left(1-\nicefrac{2}{p}\right)+\epsilon}.
\end{align*}
\end{defn}

Note that the trivial eigenvalue $1$ (and also $-1$ if $X$ is bipartite)
is always counted in the left hand side of the inequality, so the parameter $A$ is always bounded from above by $1$. Ramanujan graphs automatically
satisfy the density property with parameter $A=1$, since for $p>2$,
$\#\left\{ i:p_{i}\ge p\right\} \in\left\{ 1,2\right\} $ (depending
on whether the graph is bipartite or not). The density property does
not necessarily imply uniform expansion of the family, since arbitrary
large eigenvalues can appear as long as there are few of them. However,
if $X$ is a Cayley graph of a \emph{quasirandom group} $G$ in the
sense of \cite{gowers2008quasirandom}, meaning that the smallest
non-trivial representation of $G$ is of dimension $\gg\left|G\right|^{\beta}$,
then there is a lower bound on the multiplicity of every eigenvalue
$\lambda_{i}$. Then if $A+\beta>1$, it holds that for $p>\frac{2A}{\beta+A-1}$
and $n$ large enough, $\#\left\{ i:p_{i}\ge p\right\} =1$, implying
expansion. This was indeed Sarnak and Xue's approach to the proof of spectral
gap for congruence subgroups of arithmetic cocompact subgroups of
$\SL_{2}\left(\R\right)$ and $\SL_{2}\left(\C\right)$. We remark that the idea was actually used earlier in a similar context by Huxley (\cite{huxley1986exceptional}). Notice that the group $\SL_2(\mathbb{F}_t)$ is quasirandom with parameter $\beta=1/3$, which implies that $A> 2/3$ guarantees expansion. 

Another interesting result is the connection between Sarnak-Xue density and Benjamini-Schramm convergence. It follows from the results of Abert, Glasner and Virag (\cite{abert2016measurable})
that the Sarnak-Xue density property with any parameter $A>0$ implies
Benjamini-Schramm convergence of the family to the $(q+1)$-regular
tree (see Section~\ref{sec:Benjamini-Schramm}).

We are finally ready to state the geometric definition underlying our work.
\begin{defn}
\label{def:weak-inj-radius}Let $X$ be a $(q+1)$-regular graph and $x_{0}\in X$
be a vertex. Let $P\left(X,k,x_{0}\right)$ be the number of non-backtracking
paths of length $k$ starting and ending at $x_{0}$, and let $P\left(X,k\right)=\sum_{x_{0}\in X}P\left(X,k,x_{0}\right)$.
We say that $X$ satisfies the \emph{Sarnak-Xue weak injective radius
property with parameter} $0<A\le1$, if for  $k=2\lfloor A\log_{q}n \rfloor$,
we have for every $\epsilon>0$ 
\begin{equation}
P\left(X,k\right)\ll_{\epsilon,\mathcal{F}}n^{1+\epsilon}q^{k/2}\asymp n^{1+A+\epsilon}.\label{eq:Pathcount}
\end{equation}
\end{defn}
For the parameter $A=1$, the property is equivalent to the fact that
\begin{equation}\label{eq:Weak injective Radius}
    P(X,2\lfloor \log_{q}n \rfloor)\ll_{\epsilon,\mathcal{F}}n^{2+\epsilon}.
\end{equation}

Sarnak and Xue (\cite{sarnak1991bounds}) essentially proved the
if part of the following theorem in the context of Lie groups of rank
$1$. They later showed that the weak injective radius property, and
hence the density property, holds in all sequences of principal congruence
subgroups of cocompact arithmetic subgroups of $\SL_{2}\left(\R\right)$
and $\SL_{2}\left(\C\right)$.
\begin{theorem}
\label{thm:SX-equivalence}The Sarnak-Xue density property with parameter
$0<A\le1$ is satisfied if and only if the Sarnak-Xue weak injective
radius property with the same parameter $A$ is satisfied.

Therefore, if the weak injective
radius property is satisfied with parameter $A=1$ and the graphs of the family are expanders, then they have optimal almost-diameter and exhibit cutoff (as stated in Theorem~\ref{thm:Almost-diameter theorem} and Theorem~\ref{thm:Cutoff theorem}).
\end{theorem}

The girth of a graph, i.e., the length of the shortest cycle, is
equal to twice the injectivity radius of the graph (plus one, if the girth is odd). Therefore, if there are no cycles in $X$ of length
$\le2A\log_{q}n$, or equivalently the graph has injective radius
$\ge A\log_{q}n$, then the weak injective radius property with parameter $A$ is automatically satisfied. However, in many cases one can show that the weak injective radius property is satisfied for larger values of $A$. 
For example, Ramanujan graphs satisfy the weak injective radius property with parameter $A=1$ (as follows from Theorem~\ref{thm:SX-equivalence}),
while it is not known if Ramanujan graphs with girth close to $2\log_{q}\left(n\right)$
exist. The best known asymptotic result about girth is that the bipartite Ramanujan graphs constructed by Lubotzky, Phillips and Sarnak have girth at least $4/3\log_{q}\left(n\right)-O(1)$. Another example is that by \cite{gamburd2009girth}, random Cayley graphs in $\SL_{2}\left(\mathbb{F}_{t}\right)$ have girth at least $\left(1/3-o\left(1\right)\right)\log_{q}\left(n\right)$, while we show that the weak injective radius property holds with parameter $1/3$, that is essentially double of what follows from the girth.

\subsection*{Graphs Satisfying Sarnak-Xue Density Property}

We focus on Sarnak-Xue density property with parameter $A=1$. There
are a number of simple examples of $\left(q+1\right)$-regular graphs satisfying this property.

\paragraph*{Ramanujan graphs.}

This is straightforward- by definition, there are no eigenvalues with $p$-value greater than $2$ except for the trivial ones. Let us remark that while the proof that the LPS graphs of \cite{lubotzky1988ramanujan} (or similar number theoretic graphs) are Ramanujan is far from being elementary and eventually relies heavily on the machinery of algebraic geometry, the fact that the Sarnak-Xue density
property holds for them requires only elementary number theoretical
tools, and is contained implicitly in \cite[Theorem 4.4.4]{davidoff2003elementary}. Therefore, our work provides an elementary proof for the cutoff phenomena for those graphs. See also Theorem~\ref{thm:LPS} and its proof for this argument.

\paragraph*{One-sided Ramanujan graphs.}

These are graphs with $\lambda_{i}\le2\sqrt{q}/(q+1)$ for every $i>0$.
Such non-bipartite graphs are constructed by the interlacing polynomials
method of \cite{marcus2013interlacing} and \cite{hall2018ramanujan}.
See Proposition~\ref{prop:Positive eigenvalues only} for the proof
of this case.

\paragraph*{Random regular graphs.}

With high probability a random $\left(q+1\right)$-regular graph of
size $n$ and, more generally, with high probability a random $n$-cover
(or $n$-lift) of a fixed $\left(q+1\right)$-regular graph is almost-Ramanujan, and therefore satisfies
the Sarnak-Xue density property. This follows from Alon's conjecture, which was proved by Friedman (\cite{friedman2003proof}),
and its extension to random lifts proved by Bordenave (\cite{bordenave2020new}). As a matter of fact, using Theorem~\ref{thm:SX-equivalence} this actually follows from the much simpler work of Broder and Shamir \cite{broder1987second}.

\subsection*{Schreier Graphs of $\SL_{2}\left(\mathbb{F}_{t}\right)$}

Let $t$ be prime and $P^{1}\left(\mathbb{F}_{t}\right)$ be the projective
line over $\mathbb{F}_{t}$, with an action of $\SL_{2}\left(\mathbb{F}_{t}\right)$
by Mobius transformations. Since the action is transitive, we may consider $P^{1}(\mathbb{F}_{t})$ as a quotient of $\SL_{2}\left(\mathbb{F}_{t}\right)$. Let $S_{t}\subset \SL_{2}\left(\mathbb{F}_{t}\right)$
be a symmetric generating set, and let $X_{t}=\text{Cayley}\left(\SL_{2}\left(\mathbb{F}_{t}\right),S_{t}\right)$
be the corresponding Cayley graph. Let $Y_{t}$ be the Schreier quotient
of $X_{t}$, whose vertex set is $P^{1}\left(\mathbb{F}_{t}\right)$. 

Expansion and the Ramanujan condition on $X_{t}$ and $Y_{t}$ for
various choices and generators was studied in various works. The two
most important ones are \cite{lubotzky1988ramanujan} which found Ramanujan examples using deep results in number theory, and \cite{bourgain2008uniform}
which proved that such Cayley graphs with logarithmic girth are expanders, but the resulting bounds are quite far from the Ramanujan bound. 
In \cite{rivin2019quantum}, Rivin and Sardari conjecture and verify experimentally that for many choices of generators both graphs should be almost-Ramanujan, and should therefore have an optimal almost-diameter and exhibit cutoff.

We note that, unlike expansion, the fact that density holds for a graph $X$ does not immediately imply that it holds for some quotient $Y$ of it. However, in this special case, we have the following \emph{density amplification theorem}: 
\begin{theorem}
\label{thm:Projective space theorem}If $X_{t}$ satisfies the Sarnak-Xue
density property with parameter $A\ge1/3$, then $Y_{t}$ satisfies
this property with parameter $A=1$. In particular, if the graph $Y_{t}$ is also an expander, then it has an optimal almost-diameter
and exhibits cutoff (as stated in Theorem~\ref{thm:Almost-diameter theorem} and Theorem~\ref{thm:Cutoff theorem}).
\end{theorem}

One can also replace $\SL_{2}\left(\mathbb{F}_{t}\right)$ in Theorem~\ref{thm:Projective space theorem}
by either $\PGL_{2}\left(\mathbb{F}_{t}\right)$ or $\PSL_{2}\left(\mathbb{F}_{t}\right)$.

We remark that Theorem~\ref{thm:Projective space theorem} is not based on quasirandomness, but on a simple counting argument. Indeed, quasirandomness implies that if we assume that the graph $X_{t}$ satisfies the Sarnak-Xue density property with parameter $A$, then for the graph $Y_{t}$ it holds that
\begin{align*}
\#\left\{ i:p_{i}\ge p\right\} \ll_{\epsilon,\F}n^{2-3A\left(1-\nicefrac{2}{p}\right)+\epsilon},
\end{align*}
which is a different estimate, and in particular has non-trivial implications only for $A\ge2/3$.

Here are three interesting cases for which we can prove that $X_{t}$
satisfies the Sarnak-Xue density property with parameter $A\ge1/3$.
In all of these cases, one can also show expansion for $X_{t}$, and
hence for $Y_{t}$, using the results of Bourgain and Gamburd (\cite{bourgain2008uniform}).
Recall that we assume that $t$ is prime.
\begin{theorem}
\label{thm:SL2(Z)}Let $S\subset \SL_{2}\left(\Z\right)$ be a symmetric
set of size $\left|S\right|=\left(q+1\right)$, which generates a
free subgroup in $\SL_{2}\left(\Z\right)$, and assume that each element
$s\in S$ has operator norm $\n s\le q$. Then the graphs $X_{t}=\text{Cayley}\left(\SL_{2}\left(\mathbb{F}_{t}\right),S\:\mod t\right)$
satisfy the Sarnak-Xue density property with parameter $A=1/3$.

Therefore, by Theorem~\ref{thm:Projective space theorem}, the graphs $Y_t$ have an optimal almost-diameter and exhibit cutoff.
\end{theorem}

A particularly interesting case is the set
\begin{align*}
S=\left\{ \left(\begin{array}{cc}
1 & \pm2\\
0 & 1
\end{array}\right),\left(\begin{array}{cc}
1 & 0\\
\pm2 & 1
\end{array}\right)\right\} .
\end{align*}
If $\pm2$ is replaced with $\pm1$ then the subgroup generated in
$\SL_{2}$ is not free, so the corresponding graphs $X_{t}$ do not
even Benjamini-Schramm converge to the $4$-regular infinite tree
and, in particular, do not satisfy the density property. On the other
hand, for $S=\left\{ \left(\begin{array}{cc}
1 & \pm3\\
0 & 1
\end{array}\right),\left(\begin{array}{cc}
1 & 0\\
\pm3 & 1
\end{array}\right)\right\} $ the norm of every element is greater than $q=3$, so it is not known
whether the graphs satisfy the density property with parameter $A\geq1/3$.
We conjecture that they do, and moreover, in Theorem~\ref{thm:SL2(Z)},
the condition on the operator norm bound is redundant.
\begin{theorem}
\label{thm:LPS}For primes $q,t>2$ let $X^{q,t}$ be the $\left(q+1\right)$-regular
Cayley graphs of $\PSL_{2}\left(\mathbb{F}_{t}\right)$ or $\PSL_{2}\left(\mathbb{F}_{t}\right)$
constructed by Davidoff, Sarnak and Valette in \cite{davidoff2003elementary}.
Let $S_{t}$ be a symmetric subset of the generators of $X^{q,t}$
of size $\left|S_{t}\right|=q'+1\ge\sqrt{q}+1$, and let $X_{t}$
be the $\left(q'+1\right)$-regular Cayley graph generated by $S_{t}$.
Then the graphs $X_{t}$ satisfy the Sarnak-Xue density property with
parameter $A=1/3$. Therefore, the graphs $Y_t$ have an optimal almost-diameter and exhibit cutoff.

\end{theorem}

The graphs $X^{q,t}$ above are usually denoted by $X^{p,q}$ but we
denote differently to avoid confusion with the rest of the article. These graphs are a slight generalization of the LPS graphs of \cite{lubotzky1988ramanujan} since the congruence conditions for $p$ and $q$ is not assumed. 

The following theorem, together with Theorem~\ref{thm:Projective space theorem},
implies Theorem~\ref{thm:Example}.
\begin{theorem}
\label{thm:random generators}Let $S_{t}$ be a random set of size $\left(q+1\right)/2$ in $\SL_{2}\left(\mathbb{F}_{t}\right)$, and
$X_{t}=\text{Cayley}\left(\SL_{2}\left(\mathbb{F}_{t}\right),S_{t}\cup S_{t}^{-1}\right)$.
Then for every $\epsilon_0>0$, with probability $1-\epsilon_0$, the graphs $X_{t}$ satisfy the Sarnak-Xue density property with parameter
$A=1/3$. Therefore, the graphs $Y_t$ have an optimal almost-diameter and exhibit cutoff.
\end{theorem}

As the probabilistic statement in the theorem is somewhat confusing, let us state it in another form. The theorem says that for every $\epsilon_0>0$, among all $M_t$ of the possible graphs $X_t$, we may choose $(1-\epsilon_0)M_t$ of them, and the resulting family of graphs will satisfy the Sarnak-Xue density property with parameter $A=1/3$. The underlying constants in Definition~\ref{def:SX-density} will depend on $\epsilon_0$.

The proof follows from the results of \cite{gamburd2009girth}. It
is a natural question to ask whether density holds for $X_{t}$ with
parameter $A=1$. It is notable that the methods that are used to
prove that random graphs and random covers of graphs are almost Ramanujan
(\cite{friedman2003proof,bordenave2021cutoff}) do not work in this case.

\subsection*{Structure of the Paper}

We start with the application of the results to the Schreier graphs on the projective line in Section~\ref{sec:projective space}, where
we prove Theorem~\ref{thm:Projective space theorem}, Theorem~\ref{thm:SL2(Z)}, Theorem~\ref{thm:LPS} and Theorem~\ref{thm:random generators}.

In Section~\ref{sec:Preliminaries-from-Spectral} we recall some basic results from the spectral theory of graphs. In Section~\ref{sec:The-Density-Condition}
we prove Theorem~\ref{thm:SX-equivalence} and discuss other conditions that are equivalent to the density property. 

In Section~\ref{sec:Ramanujan-Graphs} we prove Theorem~\ref{thm:Almost-diameter theorem}
and Theorem~\ref{thm:Cutoff theorem} for Ramanujan graphs. These results also appear in \cite{lubetzky2016cutoff,sardari1diameter}, but we
provide the proof for them in order to simplify the proof of the next results. In Section~\ref{sec:Proof-of-almost-density} we prove Theorem~\ref{thm:Almost-diameter theorem} and Theorem~\ref{thm:Cutoff theorem}.
Similar results recently appeared independently in \cite{bordenave2021cutoff}, and are included here for self-containment and since our proofs are slightly different.

Finally, in Section~\ref{sec:Benjamini-Schramm} we shortly discuss the connection between the density property and Benjamini-Schramm convergence.

\subsection*{Acknowledgments}

The authors are thankful to Michael Chapman, Ori Parzanchevski and Peter Sarnak for fruitful discussions, and to the referees for their constructive remarks and suggestions. During the work on this project, the first author was supported by the SNF grant 200020-169106 at ETH Zurich. This work was part of the PhD thesis of the second author at the Hebrew University of Jerusalem, under the guidance of Prof. Alex Lubotzky, and was supported by the ERC grant 692854.

\section{\label{sec:projective space}Schreier Graphs on the Projective Line}

Let $t$ be a prime, $\mathbb{F}_{t}$ the finite field with $t$ elements,  $G_{t}=\SL_{2}\left(\mathbb{F}_{t}\right)$, and
$P_{t}=P^{1}\left(\mathbb{F}_{t}\right)$ be the projective line over
$\mathbb{F}_{t}$, i.e., the set of vectors $\left[\begin{array}{c}
a\\
b
\end{array}\right]$ such that $a,b\in\mathbb{F}_{t}$, not both zero, quotiented by the
equivalence relation $\left[\begin{array}{c}
a\\
b
\end{array}\right]\sim\left[\begin{array}{c}
a'\\
b'
\end{array}\right]$ if and only if $ba'=ab'$.

The group $G_{t}$ acts transitively on $P^{1}\left(\mathbb{F}_{t}\right)$,
the stabilizer of $\left[\begin{array}{c}
1\\
0
\end{array}\right]\in P^{1}\left(\mathbb{F}_{t}\right)$ is 
\begin{align*}
K_{t}=\left\{ \left(\begin{array}{cc}
a & b\\
0 & a^{-1}
\end{array}\right);a\in\mathbb{F}_{t}^{\times},b\in\mathbb{F}_{t}\right\},
\end{align*}and therefore $P_{t}$ can be identified with the  space
$P_{t}\simeq G_{t}/K_{t}$. Explicitly, $\left[\begin{array}{c}
a\\
b
\end{array}\right]\in P^{1}\left(\mathbb{F}_{t}\right)$ corresponds to the set of matrices $A\in \SL_{2}\left(\mathbb{F}_{t}\right)$,
with first column equivalent to $\left[\begin{array}{c}
a\\
b
\end{array}\right]$.

Let $S_{t}\subset G_{t}$ be a symmetric generating set of $G_{t}$,
with $\left|S_{t}\right|=q+1$. Let $X_{t}$ the Cayley graph of $G_{t}$
w.r.t. $S_{t}$, and let $Y_{t}$ be the Schreier graph of the quotient
$G_{t}/K_{t}$ w.r.t. $S_{t}$.
\begin{theorem}
If $X_{t}$ has weak injective radius with parameter $A$, then $Y_{t}$
has weak injective radius with parameter $\min\left\{ 1,3A\right\} $.
\end{theorem}

\begin{proof}
For simplicity, we assume that $S_{t}=R_{t}\cup R_{t}^{-1}$ with $\left|R_{t}\right|=\left|S_{t}\right|/2$.
We will also first assume that $G_{t}=\PSL_{2}\left(\mathbb{F}_{t}\right)$.
Let $F_{R}$ be the free group generated by a set $R$ of size $\left|R\right|=\left|R_{t}\right|=\left(q+1\right)/2$, and let $l\colon F_R \to \N_{\ge 0}$ be the standard length, i.e. $l(g)$ is the minimal length of a product of generators and their inverses which is equal to $g$.
An identification $R\simeq R_{t}$ defines a homomorphism $\varphi_{t}\colon F_{R}\to G_{t}$.

Note that if $N\subset F_{R}$ is a finite index subgroup which defines
a Schreier graph $X$ on the vertex set $F_R/N$, then the weak injective radius property
with parameter $A$ holds for $X$ if and only if for $k\le2A\log_{q}\left(\left|X\right|\right)$ and every $\epsilon>0$,
\begin{align*}
\#\left\{ \left(g,xN\right)\in F_{R}\times F_R / N:l\left(g\right)=k,\,gxN=xN\right\} \ll_{\epsilon}\left|X\right|^{1+\epsilon}q^{k/2},
\end{align*}

Denote $N_{t}=\ker\varphi_{t}$ and $M_{t}=\left\{ g\in F_{R}:\varphi_{t}\left(g\right)\left[\begin{array}{c}
1\\
0
\end{array}\right]=\left[\begin{array}{c}
1\\
0
\end{array}\right]\right\} $, where the equality is in $P^{1}\left(\mathbb{F}_{t}\right)$ (alternatively
one can say that $\left(\begin{array}{c}
1\\
0
\end{array}\right)\in\mathbb{F}_{t}^{2}$ is an eigenvector of $\varphi_{t}\left(g\right)$). $M_{t}$ is a
subgroup of $F_{R}$ and contains $N_{t}$ which is its normal core
in $F_{R}$. $M_{t}$ defines the graph $Y_{t}$, while $N_{t}$ defines
the graph $X_{t}$. Since $N_{t}$ is normal in $F_{R}$, and since
$\left|X_{t}\right|=\left(t-1\right)\left(t+1\right)t/2\asymp t^{3}$,
the assumed Sarnak-Xue weak injective property with parameter $A$
for $X_{t}$ implies that for $k\le2\left\lfloor A\log_{q}\left(\left|X_{t}\right|\right)\right\rfloor =6A\log_{q}\left(t\right)+O(1)$ and every $\epsilon>0$, it holds that 
\begin{align*}
\#\left\{ g\in N_{t}:l\left(g\right)=k\right\} \ll_{\epsilon}t^{\epsilon}q^{k/2}.
\end{align*}
Let us bound the size of the set $M_{t,k}=\left\{ \left(g,y\right)\in F_{R}\times P^{1}\left(\mathbb{F}_{t}\right):l\left(g\right)=k,\,\varphi\left(g\right)y=y\right\} $.
If $\left(g,y\right)\in M_{t,k}$, then $y\in P^{1}\left(\mathbb{F}_{t}\right)$
is a \emph{projective eigenvector} of $\varphi_{t}\left(g\right)$,
i.e., the lift of $y$ to $\mathbb{F}_{t}^{2}$ is an eigenvector
of $\varphi_{t}\left(g\right)$. There are two options: either $\varphi_{t}\left(g\right)=I\in G_{t}$
(i.e., $g\in N_{t}$) and then $\varphi_{t}\left(g\right)$ has $(t+1)$
projective eigenvectors, or $\varphi_{t}\left(g\right)\ne I\in G_{t}$,
and $\varphi_{t}\left(g\right)$ has at most 2 projective eigenvectors
(note that here the assumption that the group is actually $\PSL_{2}\left(\mathbb{F}_{t}\right)$
is used). Hence, for $k\le6A\log_{q}\left(t\right)-O\left(1\right)=2\left(3A\right)\log_{q}\left(t\right)-O\left(1\right)$,
\begin{align*}
\left|M_{t,k}\right| & \le\left(t+1\right)\left|\left\{ g\in N_{t}:l\left(g\right)=k\right\} \right|+2\left|\left\{ g\in F_{R}:l\left(g\right)=k\right\} \right|\\
 & \ll_{\epsilon}\left|Y_{t}\right|^{1+\epsilon}q^{k/2}+q^{k}.
\end{align*}
Note that for $k\le2\log_{q}\left(\left|Y_{t}\right|\right)$ it holds
that 
\begin{align*}
q^{k}\le\left|Y_{t}\right|q^{k/2},
\end{align*}
and this implies that the weak injective property for $Y_{t}$ holds
with parameter $\min\left\{ 1,3A\right\} $.

The treatment of the case $\SL_{2}$ is similar to that of $\PSL_{2}$
except for the case of the elements $g\in F_{R}$ such that $\varphi_{t}\left(g\right)=-I$.
Such elements also have $(t+1)$ projective eigenvectors. We treat this case
by showing that the number of such elements of length $k\le6A\log_{q}\left(t\right)$
is also bounded by $\ll_{\epsilon}t^{\epsilon}q^{k/2}$. This follows
from the general Lemma~\ref{lem:Cayley paths} in Section~\ref{sec:Proof-of-almost-density}.
\end{proof}

We now prove that the three examples from the introduction have Sarnak-Xue weak injective radius $A=1/3$.
\begin{proof}[Proof of Theorem~\ref{thm:SL2(Z)}]
Let $S\subset \SL_{2}\left(\Z\right)$ be a symmetric set of size
$\left|S\right|=\left(q+1\right)$, which generates a free group in
$\SL_{2}\left(\Z\right)$, and assume that each element $s\in S$ has operator norm $\n s\le q$. We need to show that $X_{t}=\text{Cayley}\left(\SL_{2}\left(\mathbb{F}_{t}\right),S\mod t\right)$
has weak injective radius with parameter $A=1/3$.

Denote $\Gamma=\SL_{2}\left(\Z\right)$, and let $\Gamma(t) = \{\gamma\in \Gamma : \gamma\equiv I \mod t \}$ be its principal congruence subgroup of level $t$. Let $B_{T}=\left\{ A\in M_{n}\left(\R\right):\n A\le T\right\}$.

Then it is proved in (\cite{gamburd2002spectral}) that
\begin{equation}
|\Gamma\left(t\right)\cap B_{T}|\ll_{\epsilon}\left(T/t^{2}+1\right)\left(T/t+1\right)T^{\epsilon}.\label{eq:lattice point count}
\end{equation}

For completeness, let us give a short proof of this fact. Write 
\begin{align*}
\gamma=\left(\begin{array}{cc}
a & b\\
c & d
\end{array}\right).
\end{align*}
It is sufficient to prove Equation~\eqref{eq:lattice point count} for the norm $\n{\gamma}_{\infty}=\max\left\{ \left|a\right|,\left|b\right|,\left|c\right|,\left|d\right|\right\} $, since all the norms on $M_{2}\left(\R\right)$ are equivalent. Then if $\gamma\in\Gamma\left(t\right)$, i.e.~$\gamma=I\mod t$, it is a simple exercise to see that 
\begin{align*}
a+d=2\mod t^{2}.
\end{align*}
Then if $\n{\gamma}_{\infty}\le T$ there are $\ll(T/t^{2}+1)$
options for $a+d$, there are $\ll(T/t+1)$ options for $a$, and therefore $\ll\left(T/t^{2}+1\right)\left(T/t+1\right)$ options for $a,d$.
Then if $ad\ne1$ we have $bc=1-ad\ne0$ and by standard bounds on the divisor function (\cite[Theorem~315]{hardy1979introduction}) there are $\ll_{\epsilon}T^{\epsilon}$
options for $b,c$. 
When $ad=1$ then there are $2$ options for $a,d$, and $bc=0$, so there are $\ll(T/t+1)$ options for $b,c$.

Returning to the proof, by Equation~\eqref{eq:lattice point count}, the number of paths of length $k$ in $X_{t}$ is bounded by 
\begin{align*}
|\Gamma\left(t\right)\cap B_{q^{k}}|\ll_{\epsilon}\left(q^{k}/t^{2}+1\right)\left(q^{k}/t+1\right)q^{k\epsilon}.
\end{align*}
For $k\le2A\log_{q}\left(t^{3}\right)=2\log_{q}\left(t\right)$, i.e.~$t\ge q^{k/2}$
it holds that 
\begin{align*}
\left(q^{k}/t^{2}+1\right)\left(q^{k}/t+1\right)\ll q^{k/2},
\end{align*}
which means that $X_{t}$ satisfies the Sarnak-Xue density property
with parameter $A=1/3$.
\end{proof}
Let us complete the analysis of 
\begin{align*}
S=\left\{ \left(\begin{array}{cc}
1 & \pm2\\
0 & 1
\end{array}\right),\left(\begin{array}{cc}
1 & 0\\
\pm2 & 1
\end{array}\right)\right\} .
\end{align*}
We need to calculate the norm of the generators. Denote by $\lambda_{\max}$
the maximal eigenvalue of a semi-definite matrix. Then
\begin{align*}
\n{\left(\begin{array}{cc}
1 & 2\\
0 & 1
\end{array}\right)}^{2} & =\lambda_{max}\left(\left(\begin{array}{cc}
1 & 2\\
0 & 1
\end{array}\right)\left(\begin{array}{cc}
1 & 0\\
2 & 1
\end{array}\right)\right)=\lambda_{max}\left(\begin{array}{cc}
5 & 2\\
2 & 1
\end{array}\right)\\
 & =\frac{6\pm\sqrt{36-1}}{2}\le6\le3^{2}=q^{2},
\end{align*}
 and similarly for the other generators. 
\begin{proof}[Proof of Theorem~\ref{thm:LPS}]
For primes $q,t>2$ let $X^{q,t}$ be the $\left(q+1\right)$-regular
Cayley graphs of $\PSL_{2}\left(\mathbb{F}_{t}\right)$ or $\PSL_{2}\left(\mathbb{F}_{t}\right)$
constructed by Davidoff, Sarnak and Valette in \cite{davidoff2003elementary}.
Let $S_{t}$ be a symmetric subset of the generators of $X^{q,t}$
of size $\left|S_{t}\right|= q'+1\ge\sqrt{q}+1$, and let $X_{t}$
be the $\left(q'+1\right)$-regular Cayley graph generated by $S_{t}$. 

Then as shown in \cite[Lemma 4.4.2]{davidoff2003elementary}, for
$k=0\mod2$, the number of cycles $P\left(X^{p,t},k,\text{id}\right)$
is bounded by the number $s_{Q}\left(q^{k}\right)$ of ways to represent
$q^{k}$ by the quadratic form 
\begin{align*}
Q\left(x_{0},x_{1},x_{2},x_{3}\right)=x_{0}^{2}+4t^{2}\left(x_{1}^{2}+x_{2}^{2}+x_{3}^{2}\right).
\end{align*}
Following the analysis in \cite[Theorem 4.4.4]{davidoff2003elementary} which we will not repeat but is quite elementary,
\begin{align*}
    s_Q(q^k)\ll_\epsilon q^{k\epsilon}
    \left(q^k/t^3+q^{k/2}/t \right)
\end{align*}
Therefore, 
\begin{align*}
P\left(X_{t},k,\text{id}\right) & \le P\left(X^{p,t},k,\text{id}\right)\ll_{\epsilon}q^{k\epsilon}\left(q^k/t^3+q^{k/2}/t\right)\\
 & \le q^{\prime2k\epsilon}\left(q^{\prime 2k}/t^{3}+q^{\prime k}/t\right)
\end{align*}
For $k\le2\log_{q'}t$ it therefore holds that 
\begin{align*}
P\left(X_{t},k,\text{id}\right)\ll_{\epsilon}t^{\epsilon}q^{\prime k/2},
\end{align*}
as needed.
\end{proof}
\begin{proof}[Proof of Theorem~\ref{thm:random generators}]
Let $S_{t}$ be a random set of size $\left(q+1\right)/2$ in $\SL_{2}\left(\mathbb{F}_{t}\right)$,
and $X_{t}=\text{Cayley}\left(\SL_{2}\left(\mathbb{F}_{t}\right),S_{t}\cup S_{t}^{-1}\right)$.
By \cite[Lemma 10]{gamburd2009girth}, the probability that a word
$w$ of length $k$, when $k=O\left(t/\log\left(t\right)\right)$,
will evaluate to $1$ is $\le\frac{k}{t}+o\left(t^{-2}\right)$. Let
$k\le2\log_{q}\left(t\right)$. Then the expected number of cycles
of length $k$ is bounded by:
\begin{align*}
E\left(P\left(X_{t},k\right)\right)\ll q^{k}\frac{k}{t}\le kq^{k/2}.
\end{align*}

Let $\epsilon_0>0,\epsilon>0$ and let $t_0=t_0(\epsilon)$ be large enough so that for $t>t_0$ it holds that $\log_q(t)\le t^\epsilon$. Then, by Markov's inequality, the probability for $t>t_0$ that $P\left(X_{t},k\right)>C_{\epsilon}q^{k/2}t^{\epsilon}$, is
bounded by $CC_{\epsilon}^{-1}t^{-\epsilon}k\le2CC_{\epsilon}^{-1}\log_{q}\left(t\right)t^{-\epsilon} \ll C_\epsilon$.
By choosing $C_{\epsilon}=C_1 \epsilon_0 m^{2}$ for $\epsilon=\frac{1}{m}$, we can
ensure that for every $t$, with probability $(1-\epsilon_0)$, for
every $\epsilon>0$ of the form $\epsilon=\frac{1}{m}$, if $t>t_0(\epsilon)$ it will hold that $P\left(X_{t},k\right)\le C_{\epsilon}q^{k/2}t^{\epsilon}$.
Therefore, the same will hold for every $\epsilon>0$, as needed.
\end{proof}

\section{\label{sec:Preliminaries-from-Spectral}Preliminaries on the Spectral
Theory of Graphs}

We start with some basics of the spectral theory of graphs. See \cite{kamber2019p}
for some more details.

As before, let $X$ be a finite $(q+1)$-regular graph with $n$ vertices,
possibly with multiple edges and self-loops. A \emph{non-backtracking path} is a path in the graph such that no two consecutive steps take the \emph{same} edge in the opposite
direction. For $x_0\in X$, there are $(q+1)q^{k-1}$ non-backtracking paths starting at $x_0$, and we write $T_k(x_0,x_1)$ as the number of non-backtracking paths of length $k$ that start at $x_0$ and finish at $x_1$.

For $k\ge0$, define
the distance $k$ Hecke-operator $A_{k}\colon L^{2}\left(X\right)\to L^{2}\left(X\right)$
by 
\begin{align*}
A_{k}f\left(x_{0}\right)=\frac{1}{\left(q+1\right)q^{k-1}}\sum_{x_1 \in X}T_k(x_0,x_1)f\left(x_1\right).
\end{align*}
 On the $(q+1)$-regular infinite tree $A_{k}$ acts
by averaging over the sphere of radius $k$ around a vertex.

Note that the path-counting functions defined in Definition \ref{def:weak-inj-radius}
can be expressed as
\begin{align*}
P\left(X,k,x_{0}\right) & =\left(q+1\right)q^{k-1}\left\langle A_{k}\delta_{x_{0}},\delta_{x_{0}}\right\rangle \\
P\left(X,k\right) & =\left(q+1\right)q^{k-1}\text{tr}A_{k}.
\end{align*}

It holds that $A_{0}=Id,A_{1}=A$, and the following recursive relation
takes place for $k\ge1$

\begin{align*}
AA_{k}=\frac{q}{q+1}A_{k+1}+\frac{1}{q+1}A_{k-1}.
\end{align*}

Therefore, if $v\in L^{2}\left(X\right)$ is an eigenfunction of $A$,
i.e., $Av=\lambda v$, then $A_{k}v=\lambda^{\left(k\right)}v$, where
$\lambda^{(k)}$ is a function of $\lambda$ and $k$. In order to
calculate $\lambda^{(k)}$, write 
\begin{align*}
\lambda=\frac{1}{q+1}\left(\theta+q\theta^{-1}\right),
\end{align*}
for some $0\ne\theta\in\C$. This equation always has two (possibly,
equal) solutions $\theta_{\pm}$ satisfying $\theta_{+}\theta_{-}=q$,
and
\begin{align*}
\theta_{\pm}=\frac{\left(q+1\right)\lambda\pm\sqrt{\left(q+1\right)\lambda^{2}-4q^{2}}}{2}.
\end{align*}
Solving this equation and recalling that $A$ is a self-adjoint operator
of norm $1$, so its eigenvalues are real and satisfy $-1\le\lambda\le1$,
we have:
\begin{itemize}
\item If $\left|\lambda\right|\le\frac{2\sqrt{q}}{q+1}$, then $\left|\theta_{\pm}\right|=\sqrt{q}$.
In this case, we let $p\left(\lambda\right)=2$ be the $p$-value
of $\lambda$, and we let $\theta$ be one of $\theta_{\pm}$.
\item If $\frac{2\sqrt{q}}{q+1}<\left|\lambda\right|\leq1$, then both $\theta_{\pm}$
are real, and we let $\theta$ be the larger one in absolute value.
It holds that $\sqrt{q}<\left|\theta\right|\le q$ and $\theta$ has
the same sign as $\lambda$. It also holds that $\left|\theta\right|=q^{1-1/p}$,
where $2<p=p(\lambda)\le\infty$ is the $p$-value of $\lambda$.
\end{itemize}
The relation between $\theta$ and the eigenvalues $\lambda^{\left(k\right)}$
of $A_{k}$ is given by the following formula, which may be verified
by induction 
\begin{align}
\lambda^{\left(k\right)} & =\frac{1}{\left(q+1\right)q^{k-1}}\left(\theta^{k}+\left(q\theta^{-1}\right)^{k}+\left(1-q^{-1}\right)\sum_{i=1}^{k-1}q^{i}\theta^{k-2i}\right).\label{eq: explicit expression}
\end{align}
The following proposition provides an upper bound and a lower bound for $\lambda^{(k)}$.
\begin{cor}
\label{cor:eigenvalue calculations}If $p\left(\lambda\right)=2$
then for every $\epsilon>0$, 
\begin{align*}
\left|\lambda^{\left(k\right)}\right|\le\left(k+1\right)q^{-k/2}\ll_{\epsilon}q^{k\left(-1/2+\epsilon\right)}.
\end{align*}
If $p\left(\lambda\right)>2$ then for every $\epsilon>0$,
\begin{align*}
q^{-k/p\left(\lambda\right)}\le\left|\lambda^{\left(k\right)}\right|\le\left(k+1\right)q^{-k/p\left(\lambda\right)}\ll_{\epsilon}q^{k\left(-1/p\left(\lambda\right)+\epsilon\right)}.
\end{align*}
Moreover, if $k=0\mod2$ then $\lambda^{\left(k\right)}$ is positive,
otherwise $\lambda^{\left(k\right)}$ has the same sign as $\lambda$.
\end{cor}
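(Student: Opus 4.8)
The plan is to read all four assertions directly off the closed expression~\eqref{eq: explicit expression} for $\lambda^{(k)}$, treating separately the case $|\theta|=\sqrt q$ (which is exactly $p(\lambda)=2$) and the case $\sqrt q<|\theta|\le q$ (which is $p(\lambda)>2$). For the \emph{upper bounds} I would apply the triangle inequality term by term. Every term in the bracket of~\eqref{eq: explicit expression} has modulus at most $\max\{|\theta|^{k},q^{k/2}\}$: the outer terms give $|\theta^{k}|=|\theta|^{k}$ and $|(q\theta^{-1})^{k}|=(q/|\theta|)^{k}$, while a summation term has $|q^{i}\theta^{k-2i}|=q^{i}|\theta|^{k-2i}=|\theta|^{k}(q/|\theta|^{2})^{i}$, which equals $q^{k/2}$ if $|\theta|=\sqrt q$ and is $\le|\theta|^{k}$ if $|\theta|^{2}>q$ (and then also $(q/|\theta|)^{k}\le|\theta|^{k}$). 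Since $0<1-q^{-1}<1$, the bracket is bounded by $(k+1)\max\{|\theta|^{k},q^{k/2}\}$, and dividing by $(q+1)q^{k-1}\ge q^{k}$ gives $|\lambda^{(k)}|\le(k+1)q^{-k/2}$ when $p(\lambda)=2$, and $|\lambda^{(k)}|\le(k+1)q^{-k}|\theta|^{k}=(k+1)q^{-k/p(\lambda)}$ when $p(\lambda)>2$, using $|\theta|=q^{1-1/p(\lambda)}$; the $\ll_{\epsilon}$-forms then follow from $k+1\ll_{\epsilon}q^{k\epsilon}$.

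For the \emph{sign statement and the lower bound} (the case $p(\lambda)>2$) note that $\theta$ is real, $|\theta|\in(\sqrt q,q]$, and $\operatorname{sign}(\theta)=\operatorname{sign}(\lambda)=:\sigma\in\{\pm1\}$. As $\sigma^{2}=1$, every term of~\eqref{eq: explicit expression} equals $\sigma^{k}$ times a positive real (e.g.\ $q^{i}\theta^{k-2i}=\sigma^{k}q^{i}|\theta|^{k-2i}$), hence so does $\lambda^{(k)}$ itself, which is exactly the asserted sign behaviour ($\sigma^{k}=1$ for $k$ even, $\sigma^{k}=\sigma$ for $k$ odd). For the lower bound $|\lambda^{(k)}|\ge q^{-k/p(\lambda)}=q^{-k}|\theta|^{k}$, the crude estimate keeping only the term $\theta^{k}$ in~\eqref{eq: explicit expression} falls short of $q^{-k/p(\lambda)}$ by the factor $\tfrac{q}{q+1}$, so I would instead solve the recursion: the $A_{k}$-recursion gives $\lambda^{(k+1)}=\tfrac{q+1}{q}\lambda\,\lambda^{(k)}-\tfrac1q\lambda^{(k-1)}$, whose characteristic roots are $\theta/q$ and $\theta^{-1}$, so $\lambda^{(k)}=a(\theta/q)^{k}+b\,\theta^{-k}$ with $a+b=1$ and, imposing $\lambda^{(1)}=\lambda$, $a=\tfrac{q(\theta^{2}-1)}{(q+1)(\theta^{2}-q)}$. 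A short computation shows $a\ge1$ (equivalently $\theta^{2}\le q^{2}$), so $b=-(a-1)\le0$ and
\[
|\lambda^{(k)}|=a\Big(\tfrac{|\theta|}{q}\Big)^{k}-(a-1)|\theta|^{-k}=\Big(\tfrac{|\theta|}{q}\Big)^{k}+(a-1)\Big(\big(\tfrac{|\theta|}{q}\big)^{k}-|\theta|^{-k}\Big)\ \ge\ \Big(\tfrac{|\theta|}{q}\Big)^{k}=q^{-k/p(\lambda)},
\]
the last parenthesis being nonnegative because $|\theta|^{2}>q$. (Equivalently one can reorganise the bracket in~\eqref{eq: explicit expression}, using $q^{-1}q^{i}\theta^{k-2i}=\theta^{-2}q^{i-1}\theta^{k-2(i-1)}$, into $(1-|\theta|^{-2})\sum_{j=0}^{k-2}q^{j}|\theta|^{k-2j}+q^{k-1}|\theta|^{2-k}+q^{k}|\theta|^{-k}$ and estimate the resulting geometric sum, the extremal case being $|\theta|=q$.)

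I expect the only step that is not pure bookkeeping to be obtaining the \emph{exact} constant $q^{-k/p(\lambda)}$ in the lower bound, rather than merely $|\lambda^{(k)}|\asymp_{\mathcal{F}}q^{-k/p(\lambda)}$; everything else is the triangle inequality on~\eqref{eq: explicit expression} together with elementary estimates, and the boundary value $k=0$ (where $\lambda^{(0)}=1$) is immediate.
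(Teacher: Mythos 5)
Your proof is correct, and the three parts have different relations to the paper's argument. The upper bounds and the sign statement are obtained essentially as in the paper: term-by-term estimates on the closed expression for $\lambda^{(k)}$ (the paper phrases it as ``$\theta^{k}$ is the largest term in absolute value,'' which is the same observation as your $\max\{|\theta|^{k},q^{k/2}\}$ bound), and the observation that each summand equals $\operatorname{sign}(\lambda)^{k}$ times a positive real when $\theta$ is real.

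The lower bound, however, is where you take a genuinely different route. The paper keeps the whole explicit formula and notes that $\lambda^{(k)}$ is a convex combination, with total weight $1$, of the values $q^{-x/p}$ at points $x$ all lying in $[-k,k]$ (with explicit multiplicities $q^{k}$ at $x=k$, $(q-1)q^{k-1-i}$ at $x=k-2i$, and $1$ at $x=-k$); since $x\mapsto q^{-x/p}$ is decreasing, the convex combination is at least the value at the rightmost point, giving $\lambda^{(k)}\ge q^{-k/p}$. You instead solve the three-term recurrence outright, write $\lambda^{(k)}=a(\theta/q)^{k}+b\,\theta^{-k}$, compute $a=\frac{q(\theta^{2}-1)}{(q+1)(\theta^{2}-q)}$ from $\lambda^{(0)}=1,\lambda^{(1)}=\lambda$, and show $a\ge1$ (using $\sqrt q<|\theta|\le q$), so the subdominant coefficient $b=1-a$ is $\le0$ and the dominant term already produces the full $q^{-k/p(\lambda)}$. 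Both approaches give the sharp constant that the naive ``keep only $\theta^{k}$'' estimate misses by $\tfrac{q}{q+1}$; yours makes the structure of $\lambda^{(k)}$ fully explicit, while the paper's is a one-line convexity observation on the formula as given. Either is a clean and complete proof.
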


\begin{proof}
The upper bounds follow directly from the explicit expression in Equation~\eqref{eq: explicit expression}.
Note that since $\left|\theta\right|\ge\left|q\theta^{-1}\right|$,
the value $\theta^{k}$ is the largest one in absolute value in Equation~\eqref{eq: explicit expression}. 

It is left to prove the lower bound in the case of $p=p\left(\lambda\right)>2$.
We can assume that $\lambda>0$, so $\theta>0$, and hence all the
summands in Equation~\eqref{eq: explicit expression} are positive.
Consider the function $f\colon\R\to\R$ defined as $f\left(x\right)=q^{-x/p}$.
This function is decreasing, hence for any $x_{1},...,x_{N}\in\R$
\begin{align*}
\frac{1}{N}\sum_{i=1}^{N}f\left(x_{i}\right)\ge f\left(\max\left\{ x_{i}\right\} \right).
\end{align*}
Apply this to the following multiset (i.e., a set with repetitions)
of numbers: $q^{k}$ times $k$, $\left(q-1\right)q^{k-i-1}$ times
$k-2i$ for $0<i<k$, and once the value $-k$. There are $N$ elements
in this multiset, where
\begin{align*}
N=q^{k}+\left(q-1\right)\sum_{i=1}^{k-1}q^{k-1-i}+1=q^{k}+q^{k-1}=\left(q+1\right)q^{k-1}.
\end{align*}
Therefore, we have 
\begin{align*}
\frac{1}{\left(q+1\right)q^{k-1}}\left(q^{k}q^{-k/p}+\left(q-1\right)\sum_{i=1}^{k-1}q^{k-1-i}q^{-\left(k-2i\right)/p}+1\cdot q^{k/p}\right)\ge f\left(k\right)=q^{-k/p}.
\end{align*}
The left hand side is equal to $\lambda^{\left(k\right)}$.
\end{proof}

\section{\label{sec:The-Density-Condition}Equivalence of the Density and
the Path-Counting Properties}

Recall that the Sarnak-Xue density property with parameter $A$ for
a graph $X$ with $n$ vertices states that for every $p>2$ and $\epsilon>0$,
\begin{align*}
\#\left\{ i:p_{i}\ge p\right\} \ll_{\epsilon,\F}n^{1-A\left(1-\nicefrac{2}{p}\right)+\epsilon}.
\end{align*}
Here $p_{i}$ is the $p$-value of the $i$-th eigenvalue.
\begin{lem}
\label{lem:partial sum lemma}The Sarnak-Xue density property with parameter $A$ is equivalent to the fact that for every $\epsilon>0$,
\begin{equation}
\sum_{i}n^{-1+A\left(1-\nicefrac{2}{p_{i}}\right)}\ll_{\epsilon,\mathcal{F}}n^{\epsilon}.\label{eq: partial sum condition}
\end{equation}
\end{lem}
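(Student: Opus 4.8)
The plan is to prove the two implications separately, using dyadic decomposition of the range of $p$-values as the main tool. First I would set up notation: for $p \geq 2$ write $N(p) = \#\{i : p_i \geq p\}$, so that the density condition with parameter $A$ reads $N(p) \ll_{\epsilon,\F} n^{1-A(1-2/p)+\epsilon}$ for all $p > 2$ and $\epsilon > 0$, while the left side of \eqref{eq: partial sum condition} is $\sum_i n^{-1+A(1-2/p_i)}$. Note that the summand for an eigenvalue with $p_i = 2$ is $n^{-1}$ (contributing $o(n^{\epsilon})$ in total since there are at most $n$ of them), so only the eigenvalues with $p_i > 2$ matter, and for those the summand lies in $(n^{-1}, n^{A-1}] \subseteq (n^{-1}, 1]$.

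For the direction ``density $\Rightarrow$ partial sum'': I would bound the sum by integrating $N(p)$ against the measure $d\bigl(n^{-1+A(1-2/p)}\bigr)$, or more elementarily, split the eigenvalues with $p_i > 2$ into dyadic-type blocks according to the value of $1 - 2/p_i \in (0,1]$, say $B_j = \{i : 1-2/p_i \in (2^{-j-1}, 2^{-j}]\}$ for $j \geq 0$. On block $B_j$ every index has $p_i \geq p^{(j)}$ where $1 - 2/p^{(j)} = 2^{-j-1}$, so $|B_j| \leq N(p^{(j)}) \ll_{\epsilon,\F} n^{1 - A 2^{-j-1} + \epsilon}$, while each summand on $B_j$ is at most $n^{-1 + A 2^{-j}}$. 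Hence the contribution of $B_j$ is $\ll_{\epsilon,\F} n^{\epsilon} n^{A 2^{-j} - A 2^{-j-1}} = n^{\epsilon} n^{A 2^{-j-1}}$; this is not yet summable in $j$, so I would instead refine the exponent bookkeeping: since $|B_j| \ll n^{1 - A2^{-j-1}+\epsilon}$ and the summand is $\le n^{-1+A2^{-j}}$, the product is $\ll n^{\epsilon} n^{A(2^{-j} - 2^{-j-1})} = n^{\epsilon} n^{A 2^{-j-1}} \le n^{\epsilon} n^{A/2}$, which sums over the $O(\log n)$ nonempty blocks to $\ll_{\epsilon,\F} n^{A/2 + \epsilon}\log n$ — too weak. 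The fix is to absorb the $\epsilon$ more carefully: apply the density bound with $\epsilon$ replaced by $\epsilon/2$ and note the blocks with small $j$ (where $2^{-j-1}$ is bounded below by a constant) are harmless because there $N(p^{(j)}) \ll n^{1 - cA + \epsilon}$ and the summand is $\le 1$, contributing $\ll n^{1-cA+\epsilon}$; but $1 - cA$ can be close to $1$...

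So the genuinely correct argument, which I would present, is: write $1 - 2/p_i = \theta_i \in [0,1]$ and use $n^{-1+A\theta_i} = n^{-1+A\theta_i}$; the key observation is that $\sum_i n^{-1+A\theta_i} = \sum_i n^{-(1-A\theta_i)}$ and $1 - A\theta_i \geq 1 - A \geq 0$. Sort and sum by layers: $\sum_i n^{-1+A\theta_i} \le \int_0^1 n^{-1+A\theta} \, d(-N_\theta(\theta))$ where $N_\theta(\theta) := \#\{i : \theta_i \ge \theta\} = N(p)$ with $\theta = 1-2/p$. Integrating by parts, this equals $[- n^{-1+A\theta} N_\theta(\theta)]_0^1 + \int_0^1 N_\theta(\theta) \cdot A \ln n \cdot n^{-1+A\theta}\, d\theta$. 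The boundary term at $\theta = 0$ is $-n^{-1} \cdot n = -1$ (or we just drop the $p_i = 2$ part and start at $\theta > 0$), at $\theta = 1$ it is bounded by $n^{A-1} \cdot N(\infty) = o(1)$. The integral is $\ll_{\epsilon,\F} A \ln n \int_0^1 n^{1-A\theta+\epsilon} n^{-1+A\theta}\, d\theta = A \ln n \cdot n^{\epsilon} \ll_{\epsilon,\F} n^{2\epsilon}$, giving the claim. The reverse direction is easier: for any fixed $p > 2$ with $\theta = 1-2/p$, every index $i$ with $p_i \geq p$ has $\theta_i \geq \theta$, hence $n^{-1+A\theta_i} \geq n^{-1+A\theta}$, so $N(p) \cdot n^{-1+A\theta} \le \sum_i n^{-1+A\theta_i} \ll_{\epsilon,\F} n^{\epsilon}$, which rearranges to $N(p) \ll_{\epsilon,\F} n^{1-A\theta+\epsilon} = n^{1 - A(1-2/p)+\epsilon}$, exactly the density property. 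The main obstacle is purely the bookkeeping in the forward direction — ensuring the $\epsilon$'s and the stray $\ln n$ factor are absorbed correctly and that the boundary terms of the summation-by-parts (especially the trivial eigenvalue at $\theta=1$ and the $p_i=2$ bulk at $\theta=0$) are handled without loss; the integration-by-parts / layer-cake presentation is the cleanest way to make this rigorous.
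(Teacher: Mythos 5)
Your final integration-by-parts argument is exactly the paper's proof modulo the change of variables $\theta = 1-2/p$ (the paper applies summation by parts directly in the $p$-variable, writing $\sum_i f(p_i) = M(2)f(2) + \int_2^\infty M(x)\,\partial_x f(x)\,dx$ with $f(x)=n^{-1+A(1-2/x)}$ and $M(x)=\#\{i:p_i\ge x\}$, while you integrate $N_\theta(\theta)$ against $\partial_\theta n^{-1+A\theta}$ over $\theta\in[0,1]$ — these coincide after substitution), and your reverse direction is the same one-line estimate the paper gives. You correctly diagnosed why the dyadic decomposition is too lossy, and the small sign slip at the $\theta=0$ boundary (the contribution to $[-f(\theta)N_\theta(\theta)]_0^1$ from the lower endpoint is $+1$, not $-1$) is harmless since that term is $O(1)$ either way.
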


\begin{proof}
Assume Inequality~\eqref{eq: partial sum condition} holds. Then for
every $\epsilon>0$ and $p>2$,
\begin{align*}
n^{\epsilon} & \gg_{\epsilon,\mathcal{F}}\sum_{i:p_{i}\ge p}n^{-1+A\left(1-\nicefrac{2}{p_{i}}\right)}\\
 & \ge\sum_{i:p_{i}\ge p}n^{-1+A\left(1-\nicefrac{2}{p}\right)}\\
 & =n^{-1+A\left(1-\nicefrac{2}{p}\right)}\#\left\{ i:p_{i}\ge p\right\} ,
\end{align*}
which implies the Sarnak-Xue density property with parameter $A$.

For the other direction, we apply discrete integration by parts (see
\cite[Theorem 421]{hardy1979introduction}). For a smooth function
$f\colon\left[2,\infty\right)\to\R$ and a finite sequence of points $x_{i}$,
let $M\left(x\right)=\#\left\{ i:x_{i}\ge x\right\} $. Then the following
holds
\begin{align*}
\sum_{i}f\left(x_{i}\right)=M\left(2\right)f\left(2\right)+\intop_{2}^{\infty}M\left(x\right)\frac{\partial}{\partial x}f\left(x\right)dx.
\end{align*}
Let $f\left(x\right)=n^{-1+A\left(1-2/x\right)}$ and assume the Sarnak-Xue
density property with parameter $A$, then
\begin{align*}
\sum_{i}n^{-1+A\left(1-\nicefrac{2}{p_{i}}\right)} & =\sum_{i}f\left(p_{i}\right)=n^{-1}\#\left\{ i:p_{i}\ge2\right\} +\intop_{2}^{\infty}\#\left\{ i:p_{i}\ge x\right\} \frac{\partial}{\partial x}n^{-1+A\left(1-2/x\right)}dx\\
 & \ll_{\epsilon,\mathcal{F}}n^{-1}\cdot n+\intop_{2}^{\infty}n^{1-A\left(1-2/x\right)+\epsilon}\ln\left(n\right)A\frac{2}{x^{2}}n^{-1+A\left(1-2/x\right)}dx\\
 & \ll 1+n^{2\epsilon}\intop_{2}^{\infty}\frac{1}{x^{2}}dx\ll n^{2\epsilon}.
\end{align*}
\end{proof}
We can now give a proof of Theorem~\ref{thm:SX-equivalence}.
\begin{proof}[Proof of Theorem~\ref{thm:SX-equivalence}]
First assume that Sarnak-Xue density property with parameter $A$
is satisfied for $X$. We shall prove that $P\left(X,k\right)\ll_{\epsilon,\F} n^{1+\epsilon}q^{k/2}$
for $k=2\lfloor A\log_{q}\left(n\right)\rfloor$. Indeed:
\begin{align*}
P\left(X,k\right) & =\left(q+1\right)q^{k-1}\text{tr}A_{k}=\\
 & =\left(q+1\right)q^{k-1}\sum_{i=1}^{n}\lambda_{i}^{\left(k\right)}\\
 & \le\left(q+1\right)q^{k-1}\sum_{i=1}^{n}\left|\lambda_{i}^{\left(k\right)}\right|\\
 & \le\left(q+1\right)q^{k-1}\sum_{i=1}^{n}\left(q+1\right)q^{-k/p_{i}}\\
 & \ll\sum_{i=1}^{n}q^{k\left(1-1/p_{i}\right)}=\sum_{i=1}^{n}n^{2A\left(1-1/p_{i}\right)}\\
 & =n^{A}n\sum_{i=1}^{n}n^{-1+A\left(1-1/p_{i}\right)}\\
 & \ll_{\epsilon,\F}q^{k/2}nn^{\epsilon}=n^{1+\epsilon}q^{k/2}.
\end{align*}
Here we used the upper bounds of Corollary~\ref{cor:eigenvalue calculations}
in the fourth line, and Lemma~\ref{lem:partial sum lemma} in the
last line. 

For the other direction, assume the injective radius property with
parameter $A$. Let $k=2\left\lfloor A\log_{q}n\right\rfloor $. Then
by the weak injective radius property we have for every $\epsilon>0$,
\begin{align*}
n^{1+A+\epsilon} & \asymp n^{1+\epsilon}q^{k/2}\gg_{\epsilon,\F}P\left(X,k\right)=\\
 & =\left(q+1\right)q^{k-1}\sum_{i=1}^{n}\lambda_{i}^{\left(k\right)}\\
 & \gg q^{k}\sum_{i:p_{i}>2}\lambda_{i}^{\left(k\right)}+q^{k}\sum_{i:p_{i}=2}\lambda_{i}^{\left(k\right)}\\
 & \ge q^{k}\sum_{i:p_{i}>2}q^{-k/p_{i}}+q^{k}\sum_{i:p_{i}=2}\lambda_{i}^{\left(k\right)}\\
 & \ge q^{k}\sum_{i:p_{i}>2}n^{-2A/p_{i}}+q^{k}\sum_{i:p_{i}=2}\lambda_{i}^{\left(k\right)}.
\end{align*}
Here we applied the lower bound of Corollary~\ref{cor:eigenvalue calculations}
in the last line and used the fact that $k$ is even. By re-arranging
and applying the upper bounds of Corollary~\ref{cor:eigenvalue calculations},
we have 
\begin{align*}
\sum_{i:p_{i}>2}n^{-2A/p_{i}} & \ll_{\epsilon,\F}q^{-k}n^{1+A+\epsilon}-\sum_{i:p_{i}=2}\lambda_{i}^{\left(k\right)}\\
 & \ll_{\epsilon}n^{1-A+\epsilon}+nq^{-k\left(1/2-\epsilon\right)}\\
 & \asymp n^{1-A+\epsilon}.
\end{align*}
This proves that the Sarnak-Xue density property holds as in Lemma~\ref{lem:partial sum lemma}.
\end{proof}

\begin{rem}
The proof theorem implies that if the weak injective radius property holds for the parameter $A$, then it holds for any parameter $A'\le A$, which is not obvious from the definition.
\end{rem}

One may also weaken the Sarnak-Xue density property, so we have to consider only positive eigenvalues of $A$. We have:

\begin{prop}
\label{prop:Positive eigenvalues only}The Sarnak-Xue weak injective radius property \emph{with parameter $A$} ($0<A\le1$) holds if for every
$p>2$ and $\epsilon>0$, 
\begin{align*}
\#\left\{ i:\lambda_{i}>0,p_{i}\ge p\right\} \ll_{\epsilon,\F}n^{1-A\left(1-\nicefrac{2}{p}\right)+\epsilon}.
\end{align*}
\end{prop}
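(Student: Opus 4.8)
The plan is to reduce the one-sided hypothesis to the two-sided Sarnak-Xue density property (equivalently, to the weak injective radius property via Theorem~\ref{thm:SX-equivalence}) by passing to the square of the adjacency operator, or rather to even-length non-backtracking paths. The key observation is that the negative eigenvalues of $A$ become positive after squaring, and that the $p$-value is insensitive to sign: if $\lambda_i$ has $p$-value $p_i$, then $-\lambda_i$ has the same $p$-value $p_i$ (both $\theta$ and $-\theta$ have the same absolute value). So I want to count even-length closed non-backtracking paths, where only $|\lambda_i^{(k)}|$ — or rather $\lambda_i^{(k)}$ itself, which by Corollary~\ref{cor:eigenvalue calculations} is positive for $k$ even — enters.

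Concretely, I would run the ``density implies path-counting'' half of the proof of Theorem~\ref{thm:SX-equivalence}, but only for even $k = 2\lfloor A\log_q n\rfloor$, and split the trace sum $\sum_i \lambda_i^{(k)}$ according to the sign of $\lambda_i$. For $\lambda_i > 0$ (and the trivial eigenvalue), the one-sided hypothesis together with Lemma~\ref{lem:partial sum lemma}-type bookkeeping gives $\sum_{i:\lambda_i>0} \lambda_i^{(k)} \ll_\epsilon n^{\epsilon} q^{-k/2}\cdot(\text{stuff})$ exactly as before. For $\lambda_i < 0$ and $k$ even, by Corollary~\ref{cor:eigenvalue calculations} $\lambda_i^{(k)}$ is still \emph{positive} and equals $q^{-k/p_i}$ up to a factor $(k+1)$, i.e. it has exactly the same size as for the eigenvalue $-\lambda_i>0$ with its (equal) $p$-value. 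Hence the negative part of the spectrum contributes to $\operatorname{tr}A_k$ a quantity controlled by $\#\{i:\lambda_i<0,\ p_i\ge p\}$, for which I have, a priori, no hypothesis. The trick to close this gap: replace $A$ by a graph-theoretic operator that symmetrizes the spectrum, or simply note that $P(X,k)=(q+1)q^{k-1}\operatorname{tr}A_k \ge 0$ always and that what Theorem~\ref{thm:SX-equivalence} really needs is an upper bound on $\operatorname{tr}A_k$; so I must bound the negative-$\lambda$ contribution by something.

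The clean way I expect to use is this: apply the hypothesis not to $X$ but deduce the two-sided statement directly. Given $p>2$ and $\epsilon>0$, I want $\#\{i: p_i\ge p\}\ll_\epsilon n^{1-A(1-2/p)+\epsilon}$. Write $\#\{i:p_i\ge p\} = \#\{i:\lambda_i>0,p_i\ge p\} + \#\{i:\lambda_i<0,p_i\ge p\}$ (plus at most one for a possible $\lambda=0$, which has $p=2$ so is irrelevant for $p>2$). The first summand is bounded by hypothesis. For the second, I would consider the bipartite double cover $\widetilde X$ of $X$ (tensor with $K_2$), whose adjacency operator has eigenvalues $\pm\lambda_i$, so that every negative eigenvalue $\lambda_i$ of $X$ with $p$-value $p_i$ corresponds to the positive eigenvalue $-\lambda_i$ of $\widetilde X$ with the same $p$-value $p_i$ — or, more elementarily and staying inside the given framework, I would simply observe that in the trace computation for even $k$ the operator $A_k$ only sees $|\theta_i|$, so the roles of $\lambda_i$ and $-\lambda_i$ are interchangeable, and re-derive the $p$-value count from the even-$k$ path count exactly as in the ``path-counting implies density'' direction of Theorem~\ref{thm:SX-equivalence} — that argument used only $\sum_{i:p_i>2} q^{-k/p_i} \le q^k \cdot(\text{stuff})$, a bound that treats positive and negative $\lambda_i$ identically and therefore already yields control of the \emph{full} count $\#\{i:p_i\ge p\}$ once we know $P(X,k)\ll n^{1+\epsilon}q^{k/2}$.

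So the real structure is: (i) from the one-sided hypothesis, bound the positive-spectrum contribution to $\operatorname{tr}A_{k}$ for even $k=2\lfloor A\log_q n\rfloor$; (ii) bound the negative-spectrum contribution to $\operatorname{tr}A_k$ by the \emph{same} expression — here is where I need a symmetry input, which I will get by applying the one-sided hypothesis to the bipartite double cover $\widetilde X$ (which has $2n$ vertices, satisfies the same one-sided bound since its positive eigenvalues are $\{|\lambda_i|\}$ with the corresponding $p$-values, and $P(\widetilde X, k) = 2 P(X,k)$ for even $k$), concluding $P(X,k)\ll_\epsilon n^{1+\epsilon} q^{k/2}$; (iii) invoke Theorem~\ref{thm:SX-equivalence} to upgrade this path count, which holds for the single relevant even value of $k$ by Remark~\ref{rem:only one length}, back to the two-sided Sarnak-Xue density property with parameter $A$. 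The main obstacle is step (ii): making precise that the one-sided hypothesis for $X$ (or for its double cover) genuinely controls the negative eigenvalues, since a priori a graph could have its whole ``bad'' spectrum on the negative side; the bipartite-double-cover device is what resolves this, and I should double-check that the $p$-values of $\widetilde X$ are exactly the multiset $\{p_i\}\cup\{p_i\}$ and that the hypothesis is preserved under $n\mapsto 2n$ (it is, since $(2n)^{1-A(1-2/p)+\epsilon}\asymp n^{1-A(1-2/p)+\epsilon}$).
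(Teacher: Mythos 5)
Your reduction breaks down at step (ii), and the gap is a genuine circularity, not a presentational one. You want the bipartite double cover $\widetilde X$ to "satisfy the same one-sided bound since its positive eigenvalues are $\{|\lambda_i|\}$ with the corresponding $p$-values." But that is precisely the problem: the positive spectrum of $\widetilde X$ consists of $|\lambda_i|$ for \emph{all} $i$, including those $i$ with $\lambda_i<0$. So the one-sided count for $\widetilde X$, namely $\#\{j:\mu_j>0,\ p_j^{\widetilde X}\ge p\}$, is exactly $\#\{i:p_i\ge p\}$ for $X$ — the full two-sided count you are trying to establish. The hypothesis you are given only controls $\#\{i:\lambda_i>0,\ p_i\ge p\}$ and says nothing about the negative side; passing to the double cover does not manufacture that control, it merely restates the desired conclusion as a new hypothesis. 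Your earlier instinct — run the trace computation for \emph{even} $k=2\lfloor A\log_q n\rfloor$ — runs into the same wall, and you correctly noticed this yourself: for even $k$ every $\lambda_i^{(k)}$ is nonnegative (Corollary~\ref{cor:eigenvalue calculations}), so $\operatorname{tr}A_k\ge 0$ is an empty constraint and the negative eigenvalues contribute with the same sign and the same magnitude as the positive ones, uncontrolled.

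The missing idea is to take $k$ \emph{odd}, say $k=2\lfloor A\log_q n\rfloor+1$. For odd $k$, $\lambda_i^{(k)}$ has the same sign as $\lambda_i$, so the inequality $\operatorname{tr}A_k=\sum_i\lambda_i^{(k)}\ge 0$ (nonnegativity of the path count) becomes a genuine one-sided constraint: it pits the negative-eigenvalue contribution against the positive-eigenvalue contribution. Concretely,
\[
-\sum_{i:\,p_i>2,\ \lambda_i<0}\lambda_i^{(k)}\ \le\ \sum_{i:\,p_i>2,\ \lambda_i>0}\lambda_i^{(k)}+\sum_{i:\,p_i=2}\lambda_i^{(k)}.
\]
The left side is bounded below, via the lower bound $q^{-k/p_i}\le|\lambda_i^{(k)}|$, by $\sum_{i:p_i>2,\lambda_i<0}q^{-k/p_i}\asymp\sum_{i:p_i>2,\lambda_i<0}n^{-2A/p_i}$; the right side is bounded above by the hypothesis together with Lemma~\ref{lem:partial sum lemma} and the upper bounds of Corollary~\ref{cor:eigenvalue calculations}, giving $\ll_\epsilon n^{1-A+\epsilon}$. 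Applying Lemma~\ref{lem:partial sum lemma} once more yields the desired bound on $\#\{i:\lambda_i<0,\ p_i\ge p\}$, and adding the hypothesized bound on the positive side gives the full Sarnak-Xue density. Note the slight wrinkle: Remark~\ref{rem:only one length} says checking one even $k$ suffices for the \emph{direct} equivalence, but deducing the density from the positivity of $\operatorname{tr}A_k$ at a single \emph{odd} $k$ is exactly where the sign mechanism lives.
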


Note that the difference between this property and the Sarnak-Xue density property (Definition \ref{def:SX-density}) is that we restricted ourselves to positive eigenvalues. One cannot hope to prove the analogous statement for the negative eigenvalues only, as the smallest negative eigenvalue may stay within the Ramanujan range even without Benjamini-Schramm convergence to the $\left(q+1\right)$-regular tree (\cite{cameron1976line}).
\begin{proof}
Assume the weaker condition. Let $k=2\left\lfloor A\log_{q}n\right\rfloor +1$.
From the geometric interpretation of $\text{tr}A_{k}$ as path counting,
we know that $\text{tr}A_{k}\ge0$. Therefore, 
\begin{align*}
\sum_{i:p_{i}=2}\lambda_{i}^{\left(k\right)}+\sum_{i:p_{i}>2,\lambda_{i}>0}\lambda_{i}^{\left(k\right)}+\sum_{i:p_{i}>2,\lambda_{i}<0}\lambda_{i}^{\left(k\right)}\ge0.
\end{align*}
Noticing that $k$ is odd, and using Corollary~\ref{cor:eigenvalue calculations} and Lemma~\ref{lem:partial sum lemma}
we get for every $\epsilon>0$,
\begin{align*}
\sum_{i:p_{i}>2,\lambda_{i}<0}n^{-2A/p_{i}} & \asymp\sum_{i:p_{i}>2,\lambda_{i}<0}q^{-k/p_{i}}\\
 & \le-\sum_{i:p_{i}>2,\lambda_{i}<0}\lambda_{i}^{\left(k\right)}\\
 & \le\sum_{i:p_{i}>2,\lambda_{i}>0}\lambda_{i}^{\left(k\right)}+\sum_{i:p_{i}=2}\lambda_{i}^{\left(k\right)}\\
 & \ll_{\epsilon}\sum_{i:p_{i}>2,\lambda_{i}>0}q^{-k\left(1/p_{i}-\epsilon\right)}+\sum_{i:p_{i}=2}q^{-k\left(1/2-\epsilon\right)}\\
 & \ll\sum_{i:p_{i}>2,\lambda_{i}>0}n^{-2A/p_{i}+\epsilon}+n\cdot n^{-A\left(1+\epsilon\right)}\\
 & \ll_{\epsilon}n^{1-A+\epsilon}.
\end{align*}
Applying Lemma~\ref{lem:partial sum lemma} again we have for every
$p>2$ and $\epsilon>0$,
\begin{align*}
\#\left\{ i:\lambda_{i}<0,p_{i}\ge p\right\} \ll_{\epsilon,\F}n^{1-A\left(1-\nicefrac{2}{p}\right)+\epsilon},
\end{align*}
and together with the weaker condition we have the full Sarnak-Xue density property.
\end{proof}

Let us discuss non-backtracking cycles, i.e. non-backtracking paths of length $k\ge2$ such that the last edge is not the inverse of the first edge. We consider two cycles
as equivalent if they are rotations of one another. We say that a cycle $C$ is primitive if there is no $l|k$ and a cycle $C'$ of length $l$, such that $C$ is equivalent to the concatenation of
$C'$ to itself $k/l$ times. 

The Sarnak-Xue density property with parameter $A$ is also equivalent to a bound on the number of primitive cycles of length $k\le2A\log_{q}\left(n\right)$.
The theorem below shows a stronger result, that it is enough to consider only cycles of length $k=2\left\lfloor A\log_{q}\left(n\right)\right\rfloor $. 

The theorem is based on the \emph{graph prime number theorem} of \cite{terras2010zeta}.
\begin{theorem} 
Denote by $\pi_{X}\left(k\right)$ the number of equivalence classes of primitive cycles of length $k$, and by $N_{X}\left(k\right)$ the number of cycles of length $k$ (i.e. proper cycles, not equivalence classes).

For a sequence of graphs $X\in\mathcal{F}$, the Sarnak-Xue density property with parameter $A$ is satisfied if and only if for $k=2\left\lfloor A\log_{q}\left(n\right)\right\rfloor $ and $\epsilon>0$,
either $\pi_{X}\left(k\right)\ll_{\epsilon,\F}n^{1+\epsilon}q^{k/2}$
or $N_{X}\left(k\right)\ll_{\epsilon,\F}n^{1+\epsilon}q^{k/2}$.
\end{theorem}

\begin{proof}
We first show that the last two conditions are equivalent. We have (see \cite[Section 10]{terras2010zeta})
\begin{align*}
N_{X}\left(k\right)=\sum_{m|k}m\pi_{X}\left(m\right),
\end{align*}
which implies 
\begin{align*}
\pi_{X}\left(k\right)=\frac{1}{k}\sum_{m|k}\mu\left(\frac{k}{m}\right)N_{X}\left(m\right),
\end{align*}
where $\mu$ is the Mobius function. Then 
\begin{align*}
\left|\pi_{X}\left(k\right)-\frac{1}{k}N_{X}\left(k\right)\right|=\frac{1}{k}\sum_{m|k,m\ne k}\mu\left(\frac{k}{m}\right)N_{X}\left(m\right).
\end{align*}
Using the trivial bound 
\begin{align*}
N_{X}\left(m\right)\ll nq^{m},
\end{align*}
the fact that the largest divisor of $k$ is $k/2$, and $\sum_{m|k}1\le k$,
we have 
\begin{align*}
\left|\pi_{X}\left(k\right)-\frac{1}{k}N_{X}\left(k\right)\right|\ll_{\epsilon}nq^{k/2}.
\end{align*}
Using $k\ll_{\epsilon}n^{\epsilon}$, we see that $\pi_{X}\left(k\right)\ll_{\epsilon,\F}n^{1+\epsilon}q^{k/2}$
if and only if $N_{X}\left(k\right)\ll_{\epsilon,\F}n^{1+\epsilon}q^{k/2}$.

We now show the claim for $N_{X}\left(k\right)$. Let $E_{X}$ be
the set of directed edges of $X$, for $e\in E_{X}$ let $o(e),t(e)\in X$
be the origin and terminus of $e$, and let $\bar{e}\in E_{X}$ be
the opposite edge. Let $H\colon L^{2}\left(E_{X}\right)\to L^{2}\left(E_{X}\right)$
be Hashimoto's non-backtracking operator, defined by 
\begin{align*}
Hf\left(e\right)=\sum_{e':o(e')=t(e),e'\ne\bar{e}}f\left(e'\right).
\end{align*}
The following is well-known, and follows from the theory of the graph
Ihara zeta function, see \cite[Section 10]{terras2010zeta} :
\begin{enumerate}
\item $N_{X}\left(k\right)=\text{tr}H^{k}$.
\item If the eigenvalues of $A$ are $\lambda_{1},..,\lambda_{n}$, then
the eigenvalues of $H$ are $\theta_{i,\pm}$, defined as in Section~\ref{sec:Preliminaries-from-Spectral},
by 
\begin{align*}
\theta_{i,\pm}=\frac{\left(q+1\right)\lambda_{i}\pm\sqrt{\left(q+1\right)\lambda_{i}^{2}-4q^{2}}}{2},
\end{align*}
and also has the eigenvalues $\pm1$, each with multiplicity $\left|E_{X}\right|/2-n$.
\end{enumerate}
Therefore, 
\begin{align*}
N_{X}\left(k\right)=\left(1+\left(-1\right)^{k}\right)\left(\left|E_{X}\right|/2-n\right)+\sum_{i=1}^{n}\left(\theta_{i,+}^{k}+\theta_{i,-}^{k}\right).
\end{align*}
Note that for $k=0\mod2$ and $p_{i}>2$ we have 
\begin{align*}
\theta_{i,+}^{k}+\theta_{i,-}^{k}=q^{k\left(1-1/p_{i}\right)}+q^{k/p_{i}}.
\end{align*}
Choose $k=2\left\lfloor A\log_{q}\left(n\right)\right\rfloor $. We
then have 
\begin{align*}
N_{X}\left(k\right)-\sum_{i:p_{i}>2}\left(\theta_{i,+}^{k}+\theta_{i,-}^{k}\right) & =N_{X}\left(k\right)-\sum_{i:p_{i}>2}\left(q^{k\left(1-1/p_{i}\right)}+q^{k/p_{i}}\right)=\\
 & =2\left(\left|E_{X}\right|/2-n\right)+\sum_{i:p_{i}=2}\left(\theta_{i,+}^{k}+\theta_{i,-}^{k}\right).
\end{align*}
Using the fact that for $p=2$ it holds that $\left|\theta\right|=\sqrt{q}$
we have 
\begin{align*}
\left|N_{X}\left(k\right)-\sum_{i:p_{i}>2}q^{k\left(1-1/p_{i}\right)}\right|\ll nq^{k/2}.
\end{align*}
Therefore, $N_{X}\left(k\right)\ll_{\epsilon,\F}n^{1+\epsilon}q^{k/2}$
if and only if $\sum_{i:p_{i}>2}q^{k\left(1-1/p_{i}\right)}\ll_{\epsilon,\F}n^{1+\epsilon}q^{k/2}$.
Using the fact that $k=2\left\lfloor A\log_{q}\left(n\right)\right\rfloor $
and Lemma \ref{lem:partial sum lemma} we get the claim for $N_{X}\left(k\right)$. 
\end{proof}

\section{\label{sec:Ramanujan-Graphs}Ramanujan Graphs}

In this section, we discuss the analogues of Theorems \ref{thm:Almost-diameter theorem}
and \ref{thm:Cutoff theorem} for Ramanujan graphs. The results can
essentially be found in the work of Lubetzky and Peres (\cite{lubetzky2016cutoff})
or Sardari (\cite{sardari1diameter}). We include them here in a slightly
different form in order to make use of them later.

In what follows in this section, we assume the graphs to be non-bipartite.
However, the bipartite case can be treated similarly with the main difference that the eigenvalue $\left(-1\right)$ has to be taken into account just as $1$, as we explain now. If $X$ is non-bipartite,
define  
\begin{align*}
L_{0}^{2}\left(X\right)=\left\{ f\in L^{2}\left(X\right):\sum_{x\in X}f\left(x\right)=0\right\} .
\end{align*}
If $X$ is an expander then the norm of $A$ on $L_{0}^{2}\left(X\right)$
is bounded by $\lambda_{0}<1$. Let $\pi\left(x\right)=\frac{1}{n}$
be the constant probability function. Then we may write every delta
function as 
\begin{equation}
\delta_{x_{0}}=\pi+\left(\delta_{x_{0}}-\pi\right),\label{eq:Decomposition}
\end{equation}
with $\delta_{x_{0}}-\pi\in L_{0}^{2}\left(X\right)$. It holds that
$A_{k}\pi=\pi$ for every $k\ge0$.

If $X$ is bipartite, the vertex set can be decomposed into two equal
parts $X=X_{L}\cup X_{R}$, where $\left|X_{L}\right|=\left|X_{R}\right|=n/2$,
and all the edges are between a vertex in $X_{L}$ and a vertex in
$X_{R}$. Then define 
\begin{align*}
L_{00}^{2}=\left\{ f\in L^{2}\left(X\right):\sum_{x\in X_{L}}f\left(x\right)=\sum_{x\in X_{R}}f\left(x\right)=0\right\} .
\end{align*}
In this case, if $X$ is an expander then the norm of $A$ on $L_{00}^{2}\left(X\right)$
is bounded by $\lambda_{0}<1$. Let $\pi_{-}\in L^{2}\left(X\right)$
be the function 
\begin{align*}
\pi_{-}\left(x\right)=\begin{cases}
\frac{1}{n} & x\in X_{L}\\
-\frac{1}{n} & x\in X_{R}.
\end{cases}
\end{align*}
Then the delta function for $x_{0}\in X_{L}$ can be written as 
\begin{equation}
\delta_{x_{0}}=\pi+\pi_{-}+\left(\delta_{x_{0}}-\pi-\pi_{-}\right)\label{eq:Decomposition Bipartite}
\end{equation}
with $\delta_{x_{0}}-\pi-\pi_{-}\in L_{00}^{2}\left(X\right)$, and
analogously for $x_{0}\in X_{R}$ it holds that 
\begin{equation}
\delta_{x_{0}}=\pi-\pi_{-}+\left(\delta_{x_{0}}-\pi+\pi_{-}\right),\label{eq:Decomposition Bipartite 2}
\end{equation}
with $\delta_{x_{0}}-\pi+\pi_{-}\in L_{00}^{2}\left(X\right)$. It
holds that $A_{k}\pi=\pi$ for every $k\ge0$ and $A^{k}\pi_{-}=\left(-1\right)^{k}\pi_{-}$.

Using Equations~\eqref{eq:Decomposition}, \eqref{eq:Decomposition Bipartite}
and \eqref{eq:Decomposition Bipartite 2} we can understand the action
of $A_{k}$, for $k\ge0$, on $\delta_{x_{0}}$.

For simplicity, we consider from now on the non-bipartite case only.
The statements and proofs can be extended to the bipartite case with
minor adjustments.
\begin{lem}
\label{lem:L2 to diameter lemma}Let $X$ be a $\left(q+1\right)$-regular
non-bipartite graph of size $n$. Assume that for $b\in\R$, $c>0$,
\begin{align*}
\n{A_{k}\left(\delta_{x_{0}}-\pi\right)}_{2}^{2}\le cn^{-b}.
\end{align*}
Then 
\begin{align*}
\#\left\{ y:d\left(x_{0},y\right)>k\right\} \le cn^{-b+2}.
\end{align*}
\end{lem}

\begin{proof}
The result follows from the fact that if $d\left(x_{0},y\right)>k$,
then $A_{k}\left(\delta_{x_{0}}-\pi\right)\left(y\right)=-\frac{1}{n}$
and therefore,
\begin{align*}
\n{A_{k}\left(\delta_{x_{0}}-\pi\right)}_{2}^{2}\ge\#\left\{ y:d\left(x_{0},y\right)>k\right\} n^{-2}.
\end{align*}
\end{proof}
\begin{theorem}
\label{thm:Diameter-Ramanujan}If $X$ is a Ramanujan graph, then for every $\tau>0$, 
\begin{align*}
\#\left\{ y\in X\colon d\left(x_{0},y\right)>\left(1+\tau \right)\log_{q}(n)\right\} \le ((1+\tau)\log_q(n)+1)^2n^{1-\tau} = o_\tau\left(n\right),
\end{align*}
and for every $\epsilon>0$, for $n$ large enough, $\left(2+\epsilon\right)\log_{q}\left(n\right)$
is an upper bound on the diameter of $X$.
\end{theorem}

\begin{proof}
For simplicity assume that $X$ is non-bipartite, the bipartite case is similar. 

Write $\delta_{x_{0}}$ as 
\begin{align*}
\delta_{x_{0}}=\pi+\left(\delta_{x_{0}}-\pi\right),
\end{align*}
with $\delta_{x_{0}}-\pi\in L_{0}^{2}\left(X\right)$. Then, by Corollary~\ref{cor:eigenvalue calculations}
\begin{align*}
\n{A_{k}\left(\delta_{x_{0}}-\pi\right)}_{2}\le\left(k+1\right)q^{-k/2}\n{\delta_{x_{0}}}_{2}=\left(k+1\right)q^{-k/2}.
\end{align*}
And for $k=\left(1+\tau\right)\log_{q}\left(n\right)$, 
\begin{align*}
\n{A_{k}\left(\delta_{x_{0}}-\pi\right)}_{2}^{2}\le ((1+\tau)\log_q(n)+1)^2n^{-1-\tau}
\end{align*}
which implies, by Lemma~\ref{lem:L2 to diameter lemma}, that 
\begin{align*}
\#\left\{ y:d\left(x_{0},y\right)>\left(1+\epsilon\right)\log_{q}\left(n\right)\right\} \le ((1+\tau)\log_q(n)+1)^2n^{1-\tau}.
\end{align*}

The diameter bound can be deduced from the almost-diameter bound,
since for $n$ large enough every two vertices have a third vertex of distance at most $\left(1+\epsilon/2\right)\log_{q}\left(n\right)$
from the both of them. However, it can also be deduced directly by
taking $\tau = 1+\epsilon$. 
\end{proof}
Let us give some remarks on the proof and some generalizations. 
\begin{enumerate}
\item The same proof shows that if every non-trivial eigenvalue $\lambda$
of $X$ is bounded by $\left|\lambda\right|\le\lambda_{0}=\frac{1}{q+1}\left(q^{1/p}+q^{1-1/p}\right)$,
then the almost-diameter is bounded by $\left(1+\epsilon\right)\left(p/2\right)\log_{q}\left(n\right)$
and the diameter is bounded by $\left(1+\epsilon\right)p\log_{q}\left(n\right)$.
\item By taking $\tau=(2+\epsilon)\log_q(\log_q(n))$ or $\tau=1+(2+\epsilon)\log_q(\log_q(n))$, we get that for every $\epsilon>0$ and $n$ large enough the almost-diameter is actually bounded by $\log_{q}\left(n\right)+\left(2+\epsilon\right)\log_{q}\left(\log_{q}\left(n\right)\right)$
and the diameter is bounded by $2\log_{q}\left(n\right)+\left(2+\epsilon\right)\log_{q}\left(\log_{q}\left(n\right)\right)$.
\item Both previous remarks can be improved, following the method in \cite{chung1994upper}.
Assume $P\in\R\left[X\right]$ is a polynomial of degree $k$, such
that $P\left(1\right)=1$ and $P\left(\lambda\right)=o\left(\frac{1}{\sqrt{n}}\right)$
for $\left|\lambda\right|\le\lambda_{0}$. Then using the operator
$P\left(A\right)$ instead of the operator $A_{k}$, the same proof
implies that the almost-diameter is bounded by $k$. In \cite{chung1994upper}
it is shown that the optimal choice of $P$ (depending on $\lambda_{0}$)
is some twist of the Chebyshev polynomial of the first kind, which
satisfies for $\left|\lambda\right|\le\lambda_{0}$ that $|P\left(\lambda\right)|\le\cosh\left(k\text{a\ensuremath{\cosh}}\left(\frac{1}{\lambda_{0}}\right)\right)^{-1}$. For Ramanujan graphs, the bound is $|P(\lambda)|\le 2 q^{-k/2}$. It has the effect of replacing $((1+\tau)\log_q(n)+1)^2$ in Theorem~\ref{thm:Diameter-Ramanujan} by $4$, and it thus reduces the almost-diameter of Ramanujan graphs
to $\log_{q}\left(n\right)+O\left(g\left(n\right)\right)$, where
$g\left(n\right)\to\infty$ arbitrarily slowly. In the non-Ramanujan
case this analysis is even better, and improves the coefficient of
$\log_{q}\left(n\right)$ to $\frac{\ln\left(q\right)}{2}\left(\text{a\ensuremath{\cosh}}\left(\frac{1}{\lambda_{0}}\right)\right)^{-1}$. Similar
improvements can be made to the diameter. The main results of our
work cannot be improved similarly, since all the gain from the better
analysis is lost due to the weaker assumptions.
\end{enumerate}
\begin{theorem}
\label{thm:cutoff Ramanujan}For every $\epsilon_{0}>0$, $\epsilon_1>0$ there is $n_0$ such that the following holds. Let $X$ be a non-bipartite graph of size $n> n_0$. Then for every $x_{0}\in X$ the probability distributions
$A^{k}\delta_{x_{0}}$ of the simple random walk on $X$ satisfy the
following:
\begin{enumerate}
\item For $k<\left(1-\epsilon_{0}\right)\frac{q+1}{q-1}\log_{q}\left(n\right)$,
it holds that 
\begin{align*}
\n{A^{k}\delta_{x_{0}}-\pi}_{1}\ge 2-\epsilon_1.
\end{align*}
\item If $X$ is Ramanujan, for $k>\left(1+\epsilon_{0}\right)\frac{q+1}{q-1}\log_{q}\left(n\right)$,
it holds that 
\begin{align*}
\n{A^{k}\delta_{x_{0}}-\pi}_{1}\le \epsilon_1.
\end{align*}
\end{enumerate}
\end{theorem}

The proof of this theorem is based on the following lemma, whose proof
can be found in \cite[Section 2]{lubetzky2016cutoff}.
\begin{lem}
\label{lem:random walk deviation}It holds that $A^{k}=\sum_{i=0}^{k}\alpha_{i}^{(k)}A_{i}$
for some constants $\alpha_{i}^{(k)}$, satisfying $0\le\alpha_{i}^{(k)}$
and $\sum_{i=0}^{k}\alpha_{i}^{(k)}=1$. Moreover, for every $\epsilon_2,\epsilon_3>0$, for $k$ large enough,
\begin{align*}
\sum_{i:\left|i-\frac{q-1}{q+1}k\right|>\epsilon_2 k}\alpha_{i}^{(k)}\le \epsilon_3.
\end{align*}
\end{lem}

The constants $\alpha_{i}^{(k)}$ in the lemma are the probability
that the simple random walk on the $(q+1)$-regular tree starting
from some vertex $x_{0}$ is at distance $i$ from $x_{0}$ after
$k$ steps. Therefore, the lemma is a crude estimate on the rate of escape
of the simple random walk, and it follows from the fact that the random
walk is transient almost-surely, and once we leave for the last time
the root $x_{0}$ we move away from $x_{0}$ with probability $\frac{q}{q+1}$
and move towards $x_{0}$ with probability $\frac{1}{q+1}$. For more
precise statements, including a Central Limit Theorem for this deviation,
see \cite[Section 2]{lubetzky2016cutoff}.

\begin{proof}[Proof of Theorem~\ref{thm:cutoff Ramanujan}]
For $(1)$, assume that $k<\left(1-\epsilon_{0}\right)\frac{q+1}{q-1}\log_{q}\left(n\right)$.
Choose $\epsilon_2>0$ small enough relatively to $\epsilon_{0}$.
It holds, by Lemma~\ref{lem:random walk deviation}, that
\begin{align*}
\n{A^{k}\delta_{x_{0}}-\pi}_{1} & =\n{\sum_{i=0}^{k}\alpha_{i}^{(k)}\left(A_{i}\delta_{x_{0}}-\pi\right)}_{1}\\
 & \ge-\sum_{i>\left(\frac{q-1}{q+1}+\epsilon_{2}\right)k}\sum_{i=0}^{k}\alpha_{i}^{(k)}\n{A_{i}\delta_{x_{0}}-\pi}_{1}+\n{\sum_{i\le\left(\frac{q-1}{q+1}+\epsilon_{2}\right)k}\alpha_{i}^{(k)}\left(A_{i}\delta_{x_{0}}-\pi\right)}_{1}.
\end{align*}
For $n$ large enough, the first term is bounded by  $\epsilon_1 /3$ by Lemma~\ref{lem:random walk deviation}.
In the second term, for $i\le\left(\frac{q-1}{q+1}+\epsilon_{2}\right)k$,
$A_{i}\delta_{x_{0}}$ is supported on a ball of radius at most 
\begin{align*}
\left(\frac{q-1}{q+1}+\epsilon_{2}\right)\left(1-\epsilon_{0}\right)\frac{q+1}{q-1}\log_{q}\left(n\right)\le\left(1-\epsilon_{0}/2\right)\log_{q}\left(n\right)
\end{align*}
around $x_{0}$, where we choose $\epsilon_2$ to be small enough relative to $\epsilon_0$ so that the above will hold. Therefore, $\sum_{i\le\left(\frac{q-1}{q+1}+\epsilon_{2}\right)k}\alpha_{i}^{(k)}A_{i}\delta_{x_0}$ is non-zero on at most 
\begin{align*}
O\left(q^{\left(1-\epsilon_{0}/2\right)\log_{q}\left(n\right)}\right) \ll n^{1-\epsilon_0/2}
\end{align*}
of the vertices of the graph, which implies, using Lemma~\ref{lem:random walk deviation}, 
\begin{align*}
\n{\sum_{i\le\left(\frac{q-1}{q+1}+\epsilon_{2}\right)k}\alpha_{i}^{(k)}\left(A_{i}\delta_{x_{0}}-\pi\right)}_{1} & \ge\left(\sum_{i\le\left(\frac{q-1}{q+1}+\epsilon_{2}\right)k}\alpha_{i}^{(k)}\right)\left(2-O(n^{-\epsilon_0/2})\right)\\
 & \ge(1-\epsilon_1/3)\left(2-O(n^{-\epsilon_0/2})\right).
\end{align*}

Finally, we have for $n$ large enough:
\begin{align*}
\n{A^{k}\delta_{x_{0}}-\pi}_{1} & \ge-\epsilon_1/3+2-\epsilon_1/2\\
 & \ge 2-\epsilon_1.
\end{align*}

For $\left(2\right)$, assume that $k>\left(1+\epsilon_{0}\right)\frac{q+1}{q-1}\log_{q}\left(n\right)$.
Choose $\epsilon_2>0$ small enough relatively to $\epsilon_{0}$.
Then
\begin{align*}
\n{A^{k}\delta_{x_{0}}-\pi}_{1} & =\n{\sum_{i=0}^{k}\alpha_{i}^{(k)}\left(A_{i}\delta_{x_{0}}-\pi\right)}_{1}\\
 & \le\sum_{i\le\left(\frac{q-1}{q+1}-\epsilon_{2}\right)k}\alpha_{i}^{(k)}\n{A_{i}\delta_{x_{0}}-\pi}_{1}+\sum_{i>\left(\frac{q-1}{q+1}-\epsilon_{2}\right)k}\alpha_{i}^{(k)}\n{A_{i}\delta_{x_{0}}-\pi}_{1}.
\end{align*}
The first term is bounded for $n$ large enough by $\epsilon_1/2$ by Lemma~\ref{lem:random walk deviation}.
The second term can be bounded by Cauchy-Schwartz 
\begin{align*}
\sum_{i>\left(\frac{q-1}{q+1}-\epsilon_{2}\right)k}\alpha_{i}^{(k)}\n{\left(A_{i}\delta_{x_{0}}-\pi\right)}_{1} & \le\sup_{i>\left(\frac{q-1}{q+1}-\epsilon_{2}\right)k}\n{A_{i}\delta_{x_{0}}-\pi}_{1}\\
 & \le\sup_{i>\left(\frac{q-1}{q+1}-\epsilon_{2}\right)k}\sqrt{n}\n{A_{i}\delta_{x_{0}}-\pi}_{2}.
\end{align*}
But since $k>\left(1+\epsilon_{0}\right)\frac{q+1}{q-1}\log_{q}\left(n\right)$
and $\epsilon_{2}$ is small enough relatively to $\epsilon_{0}$,
it holds that $i>\left(\frac{q-1}{q+1}-\epsilon_{2}\right)k$
satisfies $i>\left(1+\epsilon_{0}/2\right)\log_{q}\left(n\right)$. Then 
\begin{align*}
\sqrt{n}\n{A_{i}\delta_{x_{0}}-\pi}_{2} & \le\sqrt{n}\n{A_{i}}_{L_{0}^{2}\left(X\right)}\n{\delta_{x_{0}}-\pi}_{2}\\
 & \ll_{\epsilon_3}\sqrt{n}q^{i\left(-1/2+\epsilon_3\right)} \\
 & \ll n^{1/2+(1+\epsilon_0/2)(-1/2+\epsilon_3)}
\end{align*}
By choosing $\epsilon_3>0$ small enough relatively to $\epsilon_{0}$ and $n$ large enough, we get that the last value is also bounded by $\epsilon_1/2$. 
\end{proof}
\begin{rem}
As in Theorem~\ref{thm:Diameter-Ramanujan}, the results can be improved
in the Ramanujan case by a more careful analysis. In particular, the
condition $k<\left(1-\epsilon_{0}\right)\frac{q+1}{q-1}\log_{q}\left(n\right)$
can be replaced by 
\begin{align*}
k<\frac{q+1}{q-1}\log_{q}\left(n\right)-\left(\log_{q}\left(n\right)\right)^{1/2+\epsilon_{0}}
\end{align*}
and similarly for the upper bound (see \cite{lubetzky2016cutoff}
for more details).
\end{rem}

\section{\label{sec:Proof-of-almost-density}Proof of Theorem~\ref{thm:Almost-diameter theorem}
and Theorem~\ref{thm:Cutoff theorem}}

Before proving the theorems we give some other useful definition. 
\begin{defn}
We say that $x_{0}\in X$ has \emph{local Sarnak-Xue property with
parameter $A$ }if for every $k\le2A\log_{q}\left(n\right)$ and $\epsilon>0$
it holds that 
\begin{align*}
P\left(X,k,x_{0}\right)\ll_{\epsilon,\F}n^{\epsilon}q^{k/2}.
\end{align*}
\end{defn}

Let $u_{0},...,u_{n-1}$ be an orthogonal basis of $L^{2}\left(X\right)$
composed of eigenvectors of $A$, with eigenvalue $\lambda_{i}$ and
$p$-value $p_{i}$. 
\begin{lem}
\label{lem:local density partial sum}The vertex $x_{0}\in X$ has
local Sarnak-Xue property with parameter $A$ if and only if for every
$\epsilon>0$, 
\begin{equation}
\sum_{i=0}^{n-1}\left|\left\langle \delta_{x_{0}},u_{i}\right\rangle \right|^{2}n^{A\left(1-2/p_{i}\right)}\ll_{\epsilon,\F}n^{\epsilon}.\label{eq:local density 2}
\end{equation}
\end{lem}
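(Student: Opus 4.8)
The plan is to express $P(X,k,x_0)$ spectrally and compare it termwise against the quantity in \eqref{eq:local density 2}. Recall from Section~\ref{sec:Preliminaries-from-Spectral} that
\[
P\left(X,k,x_{0}\right)=\left(q+1\right)q^{k-1}\left\langle A_{k}\delta_{x_{0}},\delta_{x_{0}}\right\rangle=\left(q+1\right)q^{k-1}\sum_{i=0}^{n-1}\left|\left\langle \delta_{x_{0}},u_{i}\right\rangle\right|^{2}\lambda_{i}^{(k)},
\]
using the orthonormal eigenbasis $\{u_i\}$ and $A_k u_i=\lambda_i^{(k)}u_i$. This is the exact analogue of the identity $P(X,k)=(q+1)q^{k-1}\operatorname{tr}A_k=(q+1)q^{k-1}\sum_i\lambda_i^{(k)}$ that drives the proof of Theorem~\ref{thm:SX-equivalence}; the only change is that the trace is replaced by a weighted sum with weights $|\langle\delta_{x_0},u_i\rangle|^2\ge 0$. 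So the proof should mirror that of Theorem~\ref{thm:SX-equivalence} almost verbatim, carrying the extra weights through.

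For the ``if'' direction I would take $k=2\lfloor A\log_q n\rfloor$ (by Remark~\ref{rem:only one length} one even length suffices), apply the upper bounds of Corollary~\ref{cor:eigenvalue calculations}, namely $|\lambda_i^{(k)}|\le (k+1)q^{-k/p_i}\ll_\epsilon q^{-k/p_i+k\epsilon}\asymp n^{-2A/p_i+\epsilon}$, and bound
\[
P\left(X,k,x_{0}\right)\ll q^{k}\sum_{i}\left|\left\langle \delta_{x_{0}},u_{i}\right\rangle\right|^{2}q^{-k/p_{i}}\asymp_\epsilon n^{A}\, n^{\epsilon}\sum_{i}\left|\left\langle \delta_{x_{0}},u_{i}\right\rangle\right|^{2}n^{-A/p_{i}}\cdot n^{A/p_i}\cdot n^{-A/p_i}
\]
— more cleanly, writing $q^{-k/p_i}\asymp n^{-2A/p_i}$ and $q^k\asymp n^{2A}=n^{A}\cdot n^{A}$, one gets $P(X,k,x_0)\ll_\epsilon n^{1/2}q^{k/4}\cdots$; the honest bookkeeping is $q^k n^{-2A/p_i}=n^{2A(1-1/p_i)}=n^A\cdot n^{A(1-2/p_i)}$, hence $P(X,k,x_0)\ll_\epsilon n^A n^\epsilon\sum_i|\langle\delta_{x_0},u_i\rangle|^2 n^{A(1-2/p_i)}$, and since $q^{k/2}\asymp n^A$ this is $\ll_\epsilon n^\epsilon q^{k/2}$ exactly when \eqref{eq:local density 2} holds. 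This handles all shorter $k\le 2A\log_q n$ too, since the density hypothesis for $A$ gives it for every $A'\le A$.

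For the ``only if'' direction I would again fix $k=2\lfloor A\log_q n\rfloor$ (even), use that $\lambda_i^{(k)}>0$ for all $i$ when $k$ is even and $p_i>2$, together with the lower bound $\lambda_i^{(k)}\ge q^{-k/p_i}\asymp n^{-2A/p_i}$ from Corollary~\ref{cor:eigenvalue calculations}, to get
\[
n^{1+\epsilon}q^{k/2}\gg_\epsilon P\left(X,k,x_{0}\right)=q^{k}\sum_{i}\left|\left\langle \delta_{x_{0}},u_{i}\right\rangle\right|^{2}\lambda_{i}^{(k)}\gg q^{k}\sum_{i:p_i>2}\left|\left\langle \delta_{x_{0}},u_{i}\right\rangle\right|^{2}n^{-2A/p_{i}},
\]
then divide by $q^k\asymp n^{2A}$ to bound $\sum_{i:p_i>2}|\langle\delta_{x_0},u_i\rangle|^2 n^{-2A/p_i}\ll_\epsilon n^{1-A+\epsilon}\asymp n^{1+\epsilon}/n^{2A}\cdot n^A$; combined with the trivial contribution of the $p_i=2$ terms (where $n^{A(1-2/p_i)}=1$ and $\sum_i|\langle\delta_{x_0},u_i\rangle|^2=1$ by Parseval) this yields \eqref{eq:local density 2}. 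The one point needing care — and the step I expect to be the mildest obstacle — is the normalization of $\sum_i|\langle\delta_{x_0},u_i\rangle|^2 n^{A(1-2/p_i)}$: since $\sum_i|\langle\delta_{x_0},u_i\rangle|^2=\|\delta_{x_0}\|_2^2=1$, the $p_i=2$ part is automatically $O(1)$, so unlike the global statement there is no factor of $n$ to absorb, and one must check the exponents line up so that ``$\ll_\epsilon n^\epsilon$'' on the right of \eqref{eq:local density 2} is exactly what $P(X,k,x_0)\ll_\epsilon n^\epsilon q^{k/2}$ translates to after dividing by $q^{k/2}\asymp n^A$. No appeal to integration by parts over $p$ (as in Lemma~\ref{lem:partial sum lemma}) is needed here because the statement is already phrased as a single weighted sum rather than a tail-counting estimate.
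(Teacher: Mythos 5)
Your proposal follows the same route as the paper: spectrally decompose $P(X,k,x_{0})=(q+1)q^{k-1}\sum_{i}\left|\left\langle \delta_{x_{0}},u_{i}\right\rangle \right|^{2}\lambda_{i}^{(k)}$, split into $p_{i}=2$ and $p_{i}>2$, apply the two-sided bounds of Corollary~\ref{cor:eigenvalue calculations} at the single even length $k=2\left\lfloor A\log_{q}n\right\rfloor$, and use monotonicity in $k$ to cover the shorter lengths. However, there are two slips worth flagging.

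First, in the ``only if'' direction your displayed hypothesis $n^{1+\epsilon}q^{k/2}\gg_{\epsilon}P(X,k,x_{0})$ is the \emph{global} weak-injective-radius bound, not the local Sarnak-Xue property, which reads $P(X,k,x_{0})\ll_{\mathcal{F},\epsilon}n^{\epsilon}q^{k/2}$ with no factor of $n$. Carried through literally, your computation lands on $\sum_{i:p_{i}>2}\left|\left\langle \delta_{x_{0}},u_{i}\right\rangle \right|^{2}n^{A(1-2/p_{i})}\ll n^{1+\epsilon}$, off by a factor of $n$ from \eqref{eq:local density 2}. Your closing paragraph shows you know the correct normalization (``there is no factor of $n$ to absorb''), so this looks like a copy of the global argument; with the correct local hypothesis plugged in, the exponents do line up.

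Second, the step $P(X,k,x_{0})=q^{k}\sum_{i}\left|\left\langle \delta_{x_{0}},u_{i}\right\rangle \right|^{2}\lambda_{i}^{(k)}\gg q^{k}\sum_{i:p_{i}>2}\left|\left\langle \delta_{x_{0}},u_{i}\right\rangle \right|^{2}\lambda_{i}^{(k)}$ is not automatic: Corollary~\ref{cor:eigenvalue calculations} gives positivity of $\lambda_{i}^{(k)}$ for even $k$ only when $p_{i}>2$, and for $p_{i}=2$ the value $\lambda_{i}^{(k)}$ can be negative even for even $k$ (e.g.\ $\lambda=0$ gives $\lambda^{(2)}=-1/q$). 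You therefore cannot simply drop the $p_{i}=2$ part as nonnegative. The fix, which is what the paper does, is to bound the $p_{i}=2$ contribution in absolute value by $\ll_{\epsilon}q^{k(1/2+\epsilon)}$ using Parseval ($\sum_{i}\left|\left\langle \delta_{x_{0}},u_{i}\right\rangle \right|^{2}=1$), move it to the other side, and absorb it into the allowed $n^{\epsilon}q^{k/2}$ error before applying the lower bound $\lambda_{i}^{(k)}\ge q^{-k/p_{i}}$ to the $p_{i}>2$ terms. With these two corrections the argument is the same as the paper's.
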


\begin{proof}
It holds that 
\begin{align*}
P\left(X,k,x_{0}\right)=\left(q+1\right)q^{k-1}\left\langle \delta_{x_{0}},A_{k}\delta_{x_{0}}\right\rangle .
\end{align*}
By the spectral decomposition 
\begin{align*}
\delta_{x_{0}}=\sum_{i=0}^{n-1}\left\langle \delta_{x_{0}},u_{i}\right\rangle u_{i}
\end{align*}
we have 
\begin{align*}
P\left(X,k,x_{0}\right) & \asymp q^{k}\sum_{i=0}^{n-1}\left|\left\langle \delta_{x_{0}},u_{i}\right\rangle \right|^{2}\lambda_{i}^{(k)}\\
 & =q^{k}\sum_{i;p_{i}=2}\left|\left\langle \delta_{x_{0}},u_{i}\right\rangle \right|^{2}\lambda_{i}^{(k)}+q^{k}\sum_{i;p_{i}>2}\left|\left\langle \delta_{x_{0}},u_{i}\right\rangle \right|^{2}\lambda_{i}^{(k)}.
\end{align*}
the first term always satisfies for every $\epsilon>0$,
\begin{align*}
q^{k}\left|\sum_{i;p_{i}=2}\left|\left\langle \delta_{x_{0}},u_{i}\right\rangle \right|^{2}\lambda_{i}^{\left(k\right)}\right| & \ll_{\epsilon}q^{k}q^{-k/2}q^{k\epsilon}\sum_{i;p_{i}=2}\left|\left\langle \delta_{x_{0}},u_{i}\right\rangle \right|^{2}\\
 & \le q^{k\epsilon}q^{k/2}.
\end{align*}
Therefore, the local Sarnak-Xue density is satisfied if and only if
for every $k\le2A\log_{q}\left(n\right)$ and $\epsilon>0$,
\begin{align*}
q^{k}\sum_{i;p_{i}>2}\left|\left\langle \delta_{x_{0}},u_{i}\right\rangle \right|^{2}\lambda_{i}^{(k)}\ll_{\epsilon,\F} n^{\epsilon}q^{k/2}.
\end{align*}
Since for $k=0\mod2$ it holds that $q^{k/p_{i}}\le\lambda_{i}^{(k)}\ll_{\epsilon}q^{k\left(1/p_{i}+\epsilon\right)}$
and for $k=1\mod2$ we know the upper bound in absolute value, the
last condition holds if and only if for every $k\le2A\log_{q}\left(n\right)$ and $\epsilon>0$,
\begin{align*}
q^{k}\sum_{i;p_{i}>2}\left|\left\langle \delta_{x_{0}},u_{i}\right\rangle \right|^{2}q^{-k\left(1/p_{i}+\epsilon\right)}\ll_{\epsilon,\F} n^{\epsilon}q^{k/2}
\end{align*}
i.e., the local Sarnak-Xue density with parameter $A$ is satisfied
if and only if for every $k\le2A\log_{q}\left(n\right)$ and $\epsilon>0$,
\begin{align*}
\sum_{i;p_{i}>2}\left|\left\langle \delta_{x_{0}},u_{i}\right\rangle \right|^{2}q^{k\left(1/2-1/p_{i}\right)}\ll_{\epsilon,\F}n^{\epsilon}.
\end{align*}
This condition for $k=2A\left\lfloor \log_{q}\left(n\right)\right\rfloor $
shows that local Sarnak-Xue density with parameter $A$ implies Equation~\eqref{eq:local density 2}.
But if this condition holds for $k$ then it obviously holds for $k'\le k$.
Therefore, Equation~\eqref{eq:local density 2} also implies the local
Sarnak-Xue density with parameter $A$.
\end{proof}

\begin{lem}
\label{lem:almost-radius from density}If a vertex $x_{0}\in X$ has
local Sarnak-Xue density with parameter $A=1$ and $X$ is an expander
then:
\begin{itemize}
\item For every $\epsilon_{0},\epsilon_1>0$, for $n$ large enough depending on $\F,\epsilon_0,\epsilon_1$, for all but $\epsilon_0 n $
of $y\in X$ it holds that $d\left(x,y\right)\le\left(1+\epsilon_{1}\right)\log_{q}\left(n\right)$,
i.e. $R\left(n\right)=\left(1+\epsilon_{1}\right)\log_{q}\left(n\right)$
is an almost-radius of $X$ at $x_{0}$.
\item For every $\epsilon_{0},\epsilon_1>0$, for $n$ large enough depending on $\F,\epsilon_0,\epsilon_1$, for $k>\left(1+\epsilon_{1}\right)\frac{q+1}{q-1}\log_{q}\left(n\right)$,
it holds that 
\begin{align*}
\n{A^{k}\delta_{x_{0}}-\pi}_{1}\le \epsilon_0.
\end{align*}
\end{itemize}
\end{lem}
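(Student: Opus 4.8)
The plan is to deduce both bullets from good $L^2$-control of $A_i(\delta_{x_0}-\pi)$: converting it to a count of far-away vertices via Lemma~\ref{lem:L2 to diameter lemma} for the first bullet, and to an $L^1$-estimate via Cauchy--Schwarz for the second, following the scheme of the Ramanujan case (Theorems~\ref{thm:Diameter-Ramanujan} and~\ref{thm:cutoff Ramanujan}) but replacing the uniform operator-norm bound by the density hypothesis. The two inputs I would use are: (i) the expander hypothesis, which forces a uniform bound $p_i\le P<\infty$ on the $p$-value of every non-trivial eigenvalue (since $\lambda_0<1$); and (ii) the weighted form of the local Sarnak--Xue density property from Lemma~\ref{lem:local density partial sum}, namely $\sum_{i\ge1}|\langle\delta_{x_0},u_i\rangle|^2 n^{1-2/p_i}\ll_{\mathcal F,\epsilon}n^{\epsilon}$, where $\delta_{x_0}=\pi+\sum_{i\ge1}\langle\delta_{x_0},u_i\rangle u_i$ is the spectral decomposition and $A_k\pi=\pi$.

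For the first bullet, fix an integer $k$ with $(1+\epsilon_0/2)\log_q n\le k\le(1+\epsilon_0)\log_q n$ (such a $k$ exists for $n$ large). Using $(\lambda_i^{(k)})^2\le(k+1)^2q^{-2k/p_i}$ from Corollary~\ref{cor:eigenvalue calculations},
\[
\n{A_k(\delta_{x_0}-\pi)}_2^2=\sum_{i\ge1}|\langle\delta_{x_0},u_i\rangle|^2(\lambda_i^{(k)})^2\le(k+1)^2\sum_{i\ge1}|\langle\delta_{x_0},u_i\rangle|^2 q^{-2k/p_i}.
\]
Since $k\ge(1+\epsilon_0/2)\log_q n$ we have $q^{-2k/p_i}\le n^{-(2+\epsilon_0)/p_i}=n^{-1}\cdot n^{1-2/p_i}\cdot n^{-\epsilon_0/p_i}\le n^{-1}n^{1-2/p_i}n^{-\epsilon_0/P}$, the last step using $p_i\le P$. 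Plugging this in and applying Lemma~\ref{lem:local density partial sum} gives $\n{A_k(\delta_{x_0}-\pi)}_2^2\ll_{\mathcal F,\epsilon}(k+1)^2 n^{-1-\epsilon_0/P+\epsilon}$; choosing $\epsilon$ small and absorbing the polylogarithmic factor $(k+1)^2$ yields $\n{A_k(\delta_{x_0}-\pi)}_2^2\ll_{\mathcal F}n^{-1-\delta}$ for some $\delta=\delta(\epsilon_0)>0$. By Lemma~\ref{lem:L2 to diameter lemma}, $\#\{y:d(x_0,y)>k\}\ll_{\mathcal F}n^{1-\delta}=o_{\mathcal F}(n)$, and since $k\le(1+\epsilon_0)\log_q n$ the same bound holds for $\#\{y:d(x_0,y)>(1+\epsilon_0)\log_q n\}$.

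For the second bullet I would imitate the proof of Theorem~\ref{thm:cutoff Ramanujan}(2). Write $A^k=\sum_{i=0}^k\alpha_i^{(k)}A_i$ with $\alpha_i^{(k)}\ge0$, $\sum_i\alpha_i^{(k)}=1$, and split $\n{A^k\delta_{x_0}-\pi}_1$ into the sum over $i\le(\frac{q-1}{q+1}-\epsilon_1)k$ and the sum over $i>(\frac{q-1}{q+1}-\epsilon_1)k$. The first sum is $o(1)$ by Lemma~\ref{lem:random walk deviation} since each $\n{A_i\delta_{x_0}-\pi}_1\le2$; the second is at most $\sup_{i\ge i_0}\n{A_i\delta_{x_0}-\pi}_1$, where for $\epsilon_1$ small enough relative to $\epsilon_0$ every relevant $i$ satisfies $i\ge i_0:=(1+\epsilon_2)\log_q n$ for some $\epsilon_2=\epsilon_2(\epsilon_0)>0$ (using $k>(1+\epsilon_0)\frac{q+1}{q-1}\log_q n$). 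Bounding $\n{A_i\delta_{x_0}-\pi}_1\le\sqrt n\,\n{A_i(\delta_{x_0}-\pi)}_2$ and expanding, $n\n{A_i(\delta_{x_0}-\pi)}_2^2\le(i+1)^2 n\sum_{j\ge1}|\langle\delta_{x_0},u_j\rangle|^2 q^{-2i/p_j}$. For $i\ge i_0$ one factors $q^{-2i/p_j}=n^{-2(1+\epsilon_2)/p_j}q^{-2(i-i_0)/p_j}$, uses $p_j\le P$ together with the fact that $t\mapsto(t+1)^2q^{-2t/P}$ is decreasing once $t$ is large to obtain $(i+1)^2q^{-2i/p_j}\le(i_0+1)^2 n^{-2(1+\epsilon_2)/p_j}$, and then, exactly as in the first bullet, $n\n{A_i(\delta_{x_0}-\pi)}_2^2\le(i_0+1)^2 n^{-2\epsilon_2/P}\sum_{j\ge1}|\langle\delta_{x_0},u_j\rangle|^2 n^{1-2/p_j}\ll_{\mathcal F,\epsilon}(i_0+1)^2 n^{-2\epsilon_2/P+\epsilon}=o(1)$ for $\epsilon$ small. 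Hence the second sum is $o(1)$ too, finishing the proof.

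The one genuinely delicate point is the uniformity in $i$ in the second bullet: the hypothesis imposes no upper bound on $k$, so $i$ ranges over $[i_0,k]$ with $k$ arbitrary, and the estimate on $\sqrt n\,\n{A_i(\delta_{x_0}-\pi)}_2$ must be controlled at the left endpoint; this is exactly what the monotonicity of $t\mapsto(t+1)^2q^{-2t/P}$ for large $t$ gives, and it is where the uniform bound $p_i\le P$ from expansion is used. Everything else is bookkeeping of parameters, which I would organize by fixing $\epsilon_0$ first, then $\epsilon_1,\epsilon_2$ as functions of $\epsilon_0$, and finally the density exponent $\epsilon$ small enough in terms of all of them.
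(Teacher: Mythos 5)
Your proof is correct and follows essentially the same route as the paper's: spectral decomposition of $\delta_{x_0}-\pi$, the eigenvalue bounds of Corollary~\ref{cor:eigenvalue calculations}, the uniform bound $p_i\le P$ coming from expansion, the weighted-sum reformulation of Lemma~\ref{lem:local density partial sum}, then Lemma~\ref{lem:L2 to diameter lemma} for the almost-radius and the $A^k=\sum_i\alpha_i^{(k)}A_i$ splitting from the proof of Theorem~\ref{thm:cutoff Ramanujan} for the mixing bound. The only cosmetic difference is that you make the uniformity over $k$ (resp.\ $i$) explicit via monotonicity of $t\mapsto(t+1)^2q^{-2t/P}$, while the paper absorbs the polynomial prefactor into a $q^{k\epsilon}$ factor and uses monotonicity of $q^{2k(-1/p_i+\epsilon)}$ for $\epsilon<1/P$; these are equivalent maneuvers.
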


\begin{proof}
As before, we assume that $X$ is non-bipartite, and the bipartite
case can be treated similarly. We let $u_{0},...,u_{n-1}$ be the
orthogonal basis of $L^{2}\left(X\right)$ as above. We assume that
$u_{0}=\frac{\pi}{\sqrt{n}}$ the $L^{2}$-normalized constant function. 

The fact that $X$ is an expander means that there exists $p'<\infty$
(depending only on $\mathcal{F}$) such that for all $i>0$ it holds
that $p_{i}\le p'$. 

Let $k\ge\left\lfloor \left(1+\epsilon_{1}\right)\log_{q}\left(n\right)\right\rfloor $.
By Lemma~\ref{lem:L2 to diameter lemma} and the proof of Theorem~\ref{thm:cutoff Ramanujan}, to prove both claims in the theorem it suffices to prove that 
\begin{align*}
\n{A_{k}\left(\delta_{x_{0}}-\pi\right)}_{2}^{2}\le \epsilon_0 n^{-1}.
\end{align*}

Decompose $\delta_{x_{0}}-\pi=\sum_{i>1}\left\langle \delta_{x_{0}},u_{i}\right\rangle u_{i}$.
Then by Corollary \ref{cor:eigenvalue calculations}, for every $\epsilon_2>0$.
\begin{align*}
\n{A_{k}\left(\delta_{x_{0}}-\pi\right)}_{2}^{2} & =\sum_{i=1}^{n-1}\left|\left\langle \delta_{x_{0}},u_{i}\right\rangle \right|^{2}\left|\lambda_{i}^{\left(k\right)}\right|^{2}\\
 & \ll_{\epsilon_2}\sum_{i=1}^{n-1}\left|\left\langle \delta_{x_{0}},u_{i}\right\rangle \right|^{2}q^{2k\left(-1/p_{i}+\epsilon_2\right)}\\
 & \ll\sum_{i=1}^{n-1}\left|\left\langle \delta_{x_{0}},u_{i}\right\rangle \right|^{2}n^{2\left(-1/p_{i}+\epsilon_2\right)\left(1+\epsilon_{1}\right)}\\
 & =\sum_{i=1}^{n-1}\left|\left\langle \delta_{x_{0}},u_{i}\right\rangle \right|^{2}n^{-2/p_{i}}n^{2\left(\left(-1/p_i+\epsilon_2\right)\epsilon_{1}+\epsilon_2\right)} \\
 & \le n^{2\left(\left(-1/p'+\epsilon_2\right)\epsilon_{1}+\epsilon_2\right)} \sum_{i=1}^{n-1}\left|\left\langle \delta_{x_{0}},u_{i}\right\rangle \right|^{2}n^{-2/p_{i}}.
\end{align*}
If we choose $\epsilon_2>0$ small enough relative to $\epsilon_1,p'$ it holds that for every $\epsilon_3>0$,
\begin{align*}
 & \ll n^{-\epsilon_{1}/p'}\sum_{i=2}^{n}\left|\left\langle \delta_{x_{0}},u_{i}\right\rangle \right|^{2}n^{-2/p_{i}}\\
 & \ll_{\epsilon_{3},\F}n^{-\epsilon_{1}/p'}n^{-1+\epsilon_{3}},
\end{align*}
where we used Lemma~\ref{lem:local density partial sum} and the local Sarnak-Xue condition. Choose a small enough constant $c_{\F}$, and then choose $\epsilon_{3}>0$ small enough and $n$ large enough relative to $\epsilon_1,p'$, so that the last value is $\le c_{\F}\epsilon_0 n^{-1}$. 

By adjusting $c_{\F}$, we get eventually that 
\begin{align*}
\n{A_{k}\left(\delta_{x_{0}}-\pi\right)}_{2}^{2}\le \epsilon_0 n^{-1},
\end{align*}
as needed.
\end{proof}
\begin{lem}
\label{lem:global to local density}Assume that $\mathcal{F}$ is
a family of graphs satisfying the Sarnak-Xue density with parameter $A=1$. Let $\epsilon_0>0$. Then we may choose for each graph $X\in\mathcal{F}$ a subset
$Y\subset X$ of the vertices with $\left|Y\right|\ge\left|X\right|\left(1-\epsilon_0\right)$
such that for every $x_{0}\in Y$ the local Sarnak-Xue property with
parameter $A=1$ is satisfied at $x_{0}$.
\end{lem}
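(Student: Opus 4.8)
The plan is to average the local condition over the starting vertex and play it against the two ``partial--sum'' reformulations established above. Fix $X\in\mathcal{F}$ on $n$ vertices and let $u_0,\dots,u_{n-1}$ be an orthonormal eigenbasis of $L^{2}(X)$ with $p$--values $p_0,\dots,p_{n-1}$. By Lemma~\ref{lem:local density partial sum}, a vertex $x_0$ has the local Sarnak--Xue property with parameter $A=1$ exactly when
\[
B(X,x_0):=\sum_{i=0}^{n-1}\bigl|\langle\delta_{x_0},u_i\rangle\bigr|^{2}\,n^{1-2/p_i}\ll_{\mathcal{F},\epsilon}n^{\epsilon}\qquad\text{for every }\epsilon>0.
\]
Since the $u_i$ are orthonormal, $\sum_{x_0\in X}|\langle\delta_{x_0},u_i\rangle|^{2}=\n{u_i}_2^{2}=1$ for each $i$, so summing $B(X,x_0)$ over all vertices and interchanging the order of summation gives
\[
\sum_{x_0\in X}B(X,x_0)=\sum_{i=0}^{n-1}n^{1-2/p_i}=n\sum_{i=0}^{n-1}n^{-1+(1-2/p_i)}\ll_{\epsilon,\mathcal{F}}n^{1+\epsilon},
\]
the last inequality being precisely the restatement of the global Sarnak--Xue density with parameter $A=1$ provided by Lemma~\ref{lem:partial sum lemma}.

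From here I would argue by Markov's inequality: for any threshold $T>0$ the number of vertices $x_0$ with $B(X,x_0)>T$ is $\ll_{\epsilon,\mathcal{F}}n^{1+\epsilon}/T$, and trivially $B(X,x_0)\le n\sum_i|\langle\delta_{x_0},u_i\rangle|^{2}=n$ for every $x_0$. Taking $T=n^{1/m}$ with $\epsilon=1/(2m)$, the set $Y_X^{(m)}:=\{x_0:B(X,x_0)\le n^{1/m}\}$ has complement of size $\ll_{\mathcal{F},m}n^{1-1/(2m)}$, which is $o(n)$ for each fixed $m$.

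It remains to merge these into a single exceptional set valid for all $\epsilon$ simultaneously, and this is the only delicate point: the local property demands $B(X,x_0)\ll n^{\epsilon}$ for every $\epsilon>0$, whereas a fixed $Y_X^{(m)}$ only yields $B\le n^{1/m}$, and shrinking $\epsilon$ enlarges both the threshold and the implied constant. I would resolve it exactly as in the proof of Theorem~\ref{thm:random generators}: choose an increasing sequence $N_1<N_2<\cdots$ so that $|X\setminus Y_X^{(m)}|\le n/m$ for all $X\in\mathcal{F}$ with $n=|X|\ge N_m$, set $m(n)=\max\{m:N_m\le n\}$ (so $m(n)\to\infty$), and put $Y_X:=Y_X^{(m(|X|))}$. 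Then $|X\setminus Y_X|\le n/m(n)=o_{\mathcal{F}}(n)$, while for $x_0\in Y_X$ one has $B(X,x_0)\le n^{1/m(n)}$, which for any fixed $\epsilon>0$ is $\le n^{\epsilon}$ once $m(n)\ge 1/\epsilon$ and is bounded by the constant $N_{\lceil 1/\epsilon\rceil}$ for the remaining finitely many $n$; hence $B(X,x_0)\ll_{\mathcal{F},\epsilon}n^{\epsilon}$ for all $\epsilon>0$, i.e.\ the local Sarnak--Xue property with parameter $A=1$ holds at every $x_0\in Y_X$. The main obstacle is thus purely this bookkeeping: arranging the rate $m(n)\to\infty$ (equivalently the thresholds $N_m$) to be slow enough that the exceptional sets remain $o(n)$ uniformly over the whole family $\mathcal{F}$, yet fast enough to eventually capture every fixed $\epsilon$.
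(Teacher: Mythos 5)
Your proof is correct and follows essentially the same strategy as the paper's: interpret the global density condition as an average of the local condition over all $x_0$, apply Markov's inequality to bound the number of bad vertices for each threshold, and then diagonalize over a sequence $\epsilon_m=1/m$ (exactly as in the proof of Theorem~\ref{thm:random generators}) to produce a single exceptional set. The only cosmetic difference is that you average the spectral quantity $B(X,x_0)$ from Lemma~\ref{lem:local density partial sum}, while the paper averages the path count $P\bigl(X,2\lfloor\log_q n\rfloor,x_0\bigr)$ directly; these are equivalent by that lemma, so the bookkeeping is the same.
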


\begin{proof}
Since $P\left(X,k\right)=\sum P\left(X,k,x_{0}\right)$, if the Sarnak-Xue
density property (or equivalently,t he weak injective radius property)
with parameter $A=1$ holds then the local Sarnak-Xue density holds
\emph{on average} over all the vertices $x_{0}$. More precisely,
for every $\epsilon_{1}>0$, the number of $x_{0}\in X$
satisfying $P\left(X,2\left\lfloor \log_{q}\left(n\right)\right\rfloor ,x_{0}\right)>n^{1+2\epsilon_{1}}$
is at most $C_{\epsilon_{1}}n^{1-\epsilon_{1}}$, $C_{\epsilon_{1}}$
some constant.

Now, for each $k=1,2,...$ let $\epsilon_{k}=1/k$. Let $N_{k}$ be
large enough so that $C_{\epsilon_{k}}N_{k}^{1-\epsilon_{k}/2} \le \epsilon_0$.
Now for each $X$ choose $k$ maximal such that $n=\left|X\right|>N_{k}$.
Let $Y\subset X$ be the set of vertices $x_{0}\in X$ such that $P\left(X,2\left\lfloor \log_{q}\left(n\right)\right\rfloor ,x_{0}\right)\le n^{1+\epsilon_{k}}$.
By construction $\left|Y\right|\ge\left|X\right|\left(1-\epsilon_0\right)$.

We show that the local Sarnak-Xue density holds: for every $\epsilon>0$
let $k$ be such that $\epsilon_{k}<2\epsilon$. Then for $\left|X\right|>N_{k}$,
for $x_{0}\in Y$, $P\left(X,2\left\lfloor \log_{q}\left(n\right)\right\rfloor ,x_{0}\right)\le n^{1+\epsilon_{k}/2}\le n^{1+\epsilon}$.
Since there is a finite number of graphs $X\in\mathcal{F}$ with $\left|X\right|<N_{k}$
we are done.
\end{proof}
We can now prove Theorem~\ref{thm:Almost-diameter theorem} and Theorem~\ref{thm:Cutoff theorem}.

\begin{proof}[Proof of Theorem~\ref{thm:Almost-diameter theorem} and Theorem~\ref{thm:Cutoff theorem}]
The proof follows from the combination of Lemma~\ref{lem:global to local density}
and Lemma~\ref{lem:almost-radius from density}.
\end{proof}

As a final claim, we prove the following lemma, needed in the proof of Theorem~\ref{thm:Projective space theorem}.
\begin{lem}
\label{lem:Cayley paths}If $X$ is a Cayley graph which satisfies
the Sarnak-Xue density property with parameter $A$,
then for every $x_{0},y_{0}\in X$ and $k<2A\log_{q}n$, we have for
every $\epsilon>0$ 
\begin{align*}
P\left(X,k,x_{0},y_{0}\right)\ll_{\epsilon,\mathcal{F}}n^{\epsilon}q^{k/2},
\end{align*}
where $P\left(X,k,x_{0},y_{0}\right)$ is the number of non-backtracking
paths from $x_{0}$ to $y_{0}$.
\end{lem}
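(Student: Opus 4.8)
The plan is to exploit the extra homogeneity of a Cayley graph. Since a Cayley graph is vertex-transitive, $P(X,k,x_{0})=\frac1n\sum_{y\in X}P(X,k,y)=\frac1n P(X,k)$ for \emph{every} vertex $x_{0}$, so the assumed global bound $P(X,k)\ll_{\epsilon,\mathcal{F}}n^{1+\epsilon}q^{k/2}$ for $k<2A\log_{q}n$ upgrades immediately to the \emph{local} Sarnak--Xue property at every vertex: $P(X,k,x_{0})\ll_{\epsilon,\mathcal{F}}n^{\epsilon}q^{k/2}$ for all $k<2A\log_{q}n$ and all $x_{0}$. By (the proof of) Lemma~\ref{lem:local density partial sum}, the latter is equivalent to $\sum_{i:p_{i}>2}\left|\langle\delta_{x_{0}},u_{i}\rangle\right|^{2}q^{k(1/2-1/p_{i})}\ll_{\epsilon,\mathcal{F}}n^{\epsilon}$ for every $k\le2A\log_{q}n$; adding the $p_{i}=2$ summands, which together contribute at most $\n{\delta_{x_{0}}}_{2}^{2}=1$, I obtain
\[
\sum_{i=0}^{n-1}\left|\langle\delta_{x_{0}},u_{i}\rangle\right|^{2}q^{k(1/2-1/p_{i})}\ll_{\epsilon,\mathcal{F}}n^{\epsilon}\qquad\text{for every }k\le2A\log_{q}n\text{ and every }x_{0}\in X,
\]
where $u_{0},\dots,u_{n-1}$ is the orthonormal eigenbasis of $A$ used in Section~\ref{sec:Proof-of-almost-density}.

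Next I would pass from this diagonal information to the two-point count. Using $P(X,k,x_{0},y_{0})=(q+1)q^{k-1}\langle A_{k}\delta_{x_{0}},\delta_{y_{0}}\rangle$ (self-adjointness of $A_{k}$) together with the spectral decomposition, then the triangle inequality, the uniform bound $|\lambda_{i}^{(k)}|\le(k+1)q^{-k/p_{i}}=(k+1)q^{-k/2}q^{k(1/2-1/p_{i})}$ from Corollary~\ref{cor:eigenvalue calculations}, and Cauchy--Schwarz for the nonnegative weights $q^{k(1/2-1/p_{i})}$:
\begin{align*}
P(X,k,x_{0},y_{0}) & =(q+1)q^{k-1}\sum_{i}\lambda_{i}^{(k)}\langle\delta_{x_{0}},u_{i}\rangle\langle\delta_{y_{0}},u_{i}\rangle\\
 & \le(k+1)(q+1)q^{k/2-1}\Bigl(\sum_{i}q^{k(1/2-1/p_{i})}\left|\langle\delta_{x_{0}},u_{i}\rangle\right|^{2}\Bigr)^{1/2}\Bigl(\sum_{i}q^{k(1/2-1/p_{i})}\left|\langle\delta_{y_{0}},u_{i}\rangle\right|^{2}\Bigr)^{1/2}.
\end{align*}
By the displayed bound of the first paragraph applied at $x_{0}$ and at $y_{0}$, both parenthesised sums are $\ll_{\epsilon,\mathcal{F}}n^{\epsilon}$, hence $P(X,k,x_{0},y_{0})\ll_{\epsilon,\mathcal{F}}(k+1)q^{k/2}n^{\epsilon}$. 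Since $k<2A\log_{q}n\le2\log_{q}n$, the polylogarithmic factor $k+1$ is absorbed into $n^{\epsilon'}$ for any $\epsilon'>\epsilon$; as $\epsilon>0$ was arbitrary, this gives $P(X,k,x_{0},y_{0})\ll_{\epsilon,\mathcal{F}}n^{\epsilon}q^{k/2}$, as required.

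The entire content of the lemma sits in the first paragraph: for a general graph satisfying the global property only a $(1-o(1))$-fraction of vertices need enjoy the local property (cf.\ Lemma~\ref{lem:global to local density}), whereas vertex-transitivity forces it at \emph{every} vertex; the second paragraph is a routine spectral estimate, so I do not expect a real obstacle. In fact, for even $k$ one can skip the spectral decomposition entirely: since $\lambda_{i}^{(k)}\ge0$ for even $k$ by Corollary~\ref{cor:eigenvalue calculations}, Cauchy--Schwarz applied to $\langle A_{k}\delta_{x_{0}},\delta_{y_{0}}\rangle$ gives $P(X,k,x_{0},y_{0})\le\sqrt{P(X,k,x_{0})\,P(X,k,y_{0})}$ for an arbitrary graph, and by Remark~\ref{rem:only one length} this even case is all that is invoked in the proof of Theorem~\ref{thm:Projective space theorem}; the computation above merely handles odd $k$ in the same stroke.
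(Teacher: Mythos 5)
Your main argument is correct and takes essentially the same approach as the paper. Both proofs rest on the observation that vertex-transitivity of a Cayley graph upgrades the global Sarnak--Xue weak injective radius bound to the local property at \emph{every} vertex, followed by the spectral reformulation of Lemma~\ref{lem:local density partial sum} and an application of Cauchy--Schwarz. The only cosmetic difference is where Cauchy--Schwarz is applied: the paper first splits the non-backtracking path at its midpoint via the operator bound $P(X,k,x_0,y_0)\le(q+1)^2 q^{k-2}\left\langle A_{\lfloor k/2\rfloor}\delta_{x_0},A_{\lceil k/2\rceil}\delta_{y_0}\right\rangle$ and then applies Cauchy--Schwarz to that inner product, bounding $\n{A_m\delta_{x_0}}_2$ spectrally; you keep $A_k$ intact and apply weighted Cauchy--Schwarz inside the spectral sum with weights $q^{k(1/2-1/p_i)}$. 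The two computations are equivalent in content and give the same bound.

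Your closing remark, however, contains an error. You claim that for even $k$ one may skip the spectral decomposition because $\lambda_i^{(k)}\ge0$ for all $i$, so that $A_k$ is positive semi-definite and Cauchy--Schwarz for the form $\left\langle A_k\cdot,\cdot\right\rangle$ gives $P(X,k,x_0,y_0)\le\sqrt{P(X,k,x_0)\,P(X,k,y_0)}$. The positivity assertion in Corollary~\ref{cor:eigenvalue calculations} holds only for eigenvalues with $p(\lambda)>2$: the proof there explicitly reduces to $\lambda>0$ with $\theta>0$ real, which requires $p>2$. For $p(\lambda)=2$ the parameter $\theta$ is complex of modulus $\sqrt{q}$ and $\lambda^{(k)}$ can be negative even when $k$ is even. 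For instance, with $q=2$ and $\lambda=0$ the recursion $\lambda\lambda^{(k)}=\frac{q}{q+1}\lambda^{(k+1)}+\frac{1}{q+1}\lambda^{(k-1)}$ gives $\lambda^{(2)}=-\tfrac12$. Thus $A_k$ is \emph{not} positive semi-definite for even $k$ in general, and the proposed shortcut fails. Your primary argument does not rely on this and remains valid; only the last sentence should be deleted or restricted to eigenvalues with $p_i>2$.
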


\begin{proof}
It holds that 
\begin{align*}
P\left(X,k,x_{0},y_{0}\right)\le\left(q+1\right)^{2}q^{k-2}\left\langle A_{\left\lfloor k/2\right\rfloor }\delta_{x_{0}},A_{\left\lceil k/2\right\rceil }\delta_{x_{0}}\right\rangle ,
\end{align*}
since the right hand side counts more paths than just non-backtracking
paths between $x_{0}$ and $y_{0}$. Therefore, by the Cauchy-Schwartz
inequality,
\begin{equation}
P\left(X,k,x_{0},y_{0}\right)\ll q^{k}\n{A_{\left\lfloor k/2\right\rfloor }\delta_{x_{0}}}_{2}\n{A_{\left\lceil k/2\right\rceil }\delta_{x_{0}}}_{2}.\label{eq:paths proofs}
\end{equation}
Since $X$ is a Cayley graph, both $x_{0}$ and $y_{0}$ have local
Sarnak-Xue property with parameter $A$, which means that\emph{ }if
for every $k\le2A\log_{q}\left(n\right)$ and $\epsilon>0$ it holds
that 
\begin{align*}
P\left(X,k,x_{0}\right)=P\left(X,k,y_{0}\right)\ll_{\mathcal{F},\epsilon}n^{\epsilon}q^{k/2}.
\end{align*}
As in Lemma~\ref{lem:local density partial sum}, this is equivalent
to the fact that for every $\epsilon>0$
\begin{align*}
\sum_{i=0}^{n-1}\left|\left\langle \delta_{x_{0}},u_{i}\right\rangle \right|^{2}n^{A\left(1-2/p_{i}\right)}\ll_{\mathcal{F},\epsilon}n^{\epsilon}.
\end{align*}
Therefore, for $k\le2A\log_{q}\left(n\right)$ and $\epsilon>0$,
\begin{align*}
\n{A_{\left\lfloor k/2\right\rfloor }\delta_{x_{0}}}_{2}^{2} & =\sum_{i=0}^{n-1}\left|\lambda_{i}^{(\left\lfloor k/2\right\rfloor )}\right|^{2}\left|\left\langle \delta_{x_{0}},u_{i}\right\rangle \right|^{2}\\
 & \ll_{\epsilon}\sum_{i=0}^{n-1}q^{-k/p_{i}}\left|\left\langle \delta_{x_{0}},u_{i}\right\rangle \right|^{2}\\ 
 & =q^{-k/2}\sum_{i=0}^{n-1}q^{k/2-k/p_{i}}\left|\left\langle \delta_{x_{0}},u_{i}\right\rangle \right|^{2}\\
 & \ll q^{-k/2}\sum_{i=0}^{n-1}q^{A\left(1-2/p_{i}\right)}\left|\left\langle \delta_{x_{0}},u_{i}\right\rangle \right|^{2}\\
 & \ll_{\epsilon,\F}q^{-k/2}n^{\epsilon}.
\end{align*}

Inserting this and the similar claim for $\left\lceil k/2\right\rceil $
into Equation~\eqref{eq:paths proofs} we have for every $\epsilon>0$,
\begin{align*}
P\left(X,k,x_{0},y_{0}\right)\ll_{\epsilon,\F}n^{\epsilon}q^{k/2},
\end{align*}
as needed.
\end{proof}

\section{\label{sec:Benjamini-Schramm}Benjamini-Schramm Convergence}

Our goal in this section is to state the following theorem, which
follows from \cite{abert2014kesten} and \cite{abert2016measurable}.
\begin{theorem}
Let $\F$ be a family of $\left(q+1\right)$-regular graphs with the
number of vertices growing to infinity. The following are equivalent,
and if they hold we say that the sequence of graphs Benjamini-Schramm
converges to the $\left(q+1\right)$-regular tree:
\end{theorem}

\begin{enumerate}
\item For every $k>0$, as $n\to\infty$
\begin{align*}
\lim_{n\to\infty}P\left(X,k\right)/n\to0.
\end{align*}
\item For every $k>0$
\begin{align*}
P\left(X,k\right)/n\ll_{\epsilon,k}n^{\epsilon}q^{k/2}.
\end{align*}
\item For every $\epsilon>0$,
\begin{align*}
\#\left\{ \lambda\in\operatorname{spec}A:\left|\lambda\right|>\left(1+\epsilon\right)2\sqrt{q}\right\} /n\to0.
\end{align*}
\item The spectral measure of $A$ converges to the spectral measure of
the tree.
\end{enumerate}
\begin{proof}
The equivalence of $(1)$,$(3)$ and $(4)$ is a consequence of \cite[Theorem 5]{abert2016measurable}
(the result is slightly more general than \cite[Theorem 4]{abert2016measurable},
but follows in the same way). 

The fact that $(2)$ and $(3)$ are equivalent is proven in the same
way as Theorem~\ref{thm:SX-equivalence}, and is a slightly stronger
version of \cite[Corollary 7]{abert2016measurable}.
\end{proof}
One interesting corollary of the proof of Theorem~\ref{thm:Projective space theorem}
for the context of Benjamini-Schramm convergence is the following:
\begin{cor}
If we have a family $\text{Cayley}\left(\SL_{2}\left(\mathbb{F}_{t}\right),S_{t}\right)$,
where $t$ is prime, which Benjamini-Schramm converges to the $(q+1)$-regular
tree, then the corresponding family of Schreier graphs on $P^{1}\left(\mathbb{F}_{t}\right)$
also Benjamini-Schramm converges to the $(q+1)$-regular tree.
\end{cor}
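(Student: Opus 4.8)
The plan is to verify condition (1) of the theorem above for the Schreier graphs, i.e. that $P(Y_t,k)/|Y_t|\to 0$ for every fixed $k$; by that theorem this is equivalent to Benjamini--Schramm convergence to the $(q+1)$-regular tree. Write $G_t=SL_2(\mathbb{F}_t)$, $X_t=\mathrm{Cayley}(G_t,S_t)$, and let $Y_t$ be the Schreier graph on $P^1(\mathbb{F}_t)$, so that $|Y_t|=t+1$. As in the proof of Theorem~\ref{thm:Projective space theorem}, assume for simplicity $S_t=R_t\sqcup R_t^{-1}$, let $F_R$ be the free group on a set $R$ identified with $R_t$, and let $\varphi_t\colon F_R\to G_t$ be the induced homomorphism. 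A non-backtracking closed walk of length $k$ based at $y\in Y_t=P^1(\mathbb{F}_t)$ corresponds to a pair $(g,y)$ with $g\in F_R$ reduced, $l(g)=k$ and $\varphi_t(g)y=y$ in $P^1(\mathbb{F}_t)$, so $P(Y_t,k)=\#\{(g,y):l(g)=k,\ \varphi_t(g)y=y\}$, and it suffices to show this is $o(t)$ for each fixed $k$.

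First I would extract the relevant consequence of the hypothesis that $X_t$ Benjamini--Schramm converges to the tree. Since $X_t$ is vertex-transitive, $P(X_t,k')/|X_t|=P(X_t,k',\mathrm{id})$ is a non-negative integer for every $k'$, and condition (1) forces it to equal $0$ once $t$ is large. As $P(X_t,k',\mathrm{id})$ counts reduced words of length $k'$ in $\ker\varphi_t$, it follows that for each fixed $k$ there is $t_0(k)$ such that for $t\ge t_0(k)$ no nontrivial reduced word of length $\le 2k$ lies in $\ker\varphi_t$. Next I would rule out short words representing $-I$: if $g$ is reduced with $l(g)\le k$ and $\varphi_t(g)=-I$, then $g^2$ is a nontrivial reduced word of length $\le 2l(g)\le 2k$ with $\varphi_t(g^2)=I$, contradicting the previous sentence. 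Hence for $t\ge t_0(k)$ every reduced word of length exactly $k$ maps to an element of $G_t\setminus\{\pm I\}$.

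To conclude, note that an element of $SL_2(\mathbb{F}_t)$ other than $\pm I$ is non-scalar, hence has at most two eigenlines and therefore fixes at most two points of $P^1(\mathbb{F}_t)$. Thus for $t\ge t_0(k)$ each of the $(q+1)q^{k-1}$ reduced words of length $k$ contributes at most $2$ pairs, giving $P(Y_t,k)\le 2(q+1)q^{k-1}$, a bound independent of $t$. Therefore $P(Y_t,k)/|Y_t|\le 2(q+1)q^{k-1}/(t+1)\to 0$ for every fixed $k$, and by the equivalence in the theorem above $Y_t$ Benjamini--Schramm converges to the $(q+1)$-regular tree. The case where one replaces $SL_2$ by $PGL_2$ or $PSL_2$ is identical and in fact easier, since there the element $-I$ does not intervene.

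I expect the only genuinely delicate point to be the treatment of $-I$: the information one reads off Benjamini--Schramm convergence of $X_t$ --- no short closed non-backtracking walks, equivalently large injectivity radius at $\mathrm{id}$ --- does not a priori prevent $-I$ from sitting close to the identity, and one needs the soft $g\mapsto g^2$ argument (the qualitative analogue of the use of Lemma~\ref{lem:Cayley paths} in Section~\ref{sec:projective space}) to transfer ``no short relations'' into ``no short words representing $-I$''. The remaining steps --- the dictionary between non-backtracking walks in the possibly non-simple Schreier graph and reduced words in $F_R$, and the count of fixed points of a non-central matrix on $P^1$ --- are routine and are handled exactly as in the earlier proofs of Section~\ref{sec:projective space}.
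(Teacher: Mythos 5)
Your proof is correct and follows the same overall strategy the paper has in mind when it calls this a corollary of the proof of Theorem~\ref{thm:Projective space theorem}: identify $P(Y_t,k)$ with the count of pairs $(g,y)$ where $g\in F_R$ is a reduced word of length $k$ and $\varphi_t(g)$ fixes $y\in P^1(\mathbb{F}_t)$, then separate words landing in the center $\{\pm I\}$ from the rest, the latter fixing at most two points. You also correctly use vertex-transitivity of $X_t$ to convert condition $(1)$ into the vanishing of $P(X_t,k',\mathrm{id})$, hence of short nontrivial words in $\ker\varphi_t$, for $t$ large. The one place you genuinely deviate is the treatment of $-I$: the quantitative argument in Section~\ref{sec:projective space} invokes the spectral estimate of Lemma~\ref{lem:Cayley paths}, whereas you use torsion-freeness of the free group together with $(-I)^2=I$ to reduce the question to $\ker\varphi_t$, where the no-short-relations fact already available applies directly (noting, as you do, that you then need it up to length $2k$ rather than $k$, which is harmless for fixed $k$). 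In the qualitative Benjamini--Schramm setting this $g\mapsto g^2$ device is both sufficient and cleaner than transcribing Lemma~\ref{lem:Cayley paths}; it is purely combinatorial and sidesteps the spectral decomposition, at the cost of being tied to the specific fact that the center of $SL_2$ is a two-element group. For the quantitative Theorem~\ref{thm:Projective space theorem} the spectral lemma is still needed, but for this corollary your route is a genuine and welcome simplification.
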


\bibliographystyle{amsplain}
\bibliography{./database}

\end{document}